\newcolumntype{C}[1]{>{\centering\arraybackslash}p{#1}}
\newcommand{\SF}{\varepsilon}
\renewcommand{\L}{\mathcal L}
\newcommand{\Y}{\mathcal Y}
\newcommand{\U}{\mathcal{U}}
\newcommand{\T}{\mathbb{T}}
\newcommand{\F}{\mathcal{F}}
\renewcommand{\SF}{{\mathcal{F}}}
\newcommand{\SL}{{\mathcal{L}}}
\newcommand{\SU}{{\mathcal{U}}}
\newcommand{\Z}{\mathbb{Z}}
\newcommand{\R}{\mathbb{R}}
\renewcommand{\S}{\mathbb{S}}
\newcommand{\GL}{\operatorname{GL}}
\newcommand{\Id}{\operatorname{Id}}
\newcommand{\const}{\textup{const}}
\newcommand{\SSO}{\textup{SO}}
\newcommand{\we}{\wedge}
\newcommand{\pp}[2]{\frac{\partial#1}{\partial#2}}
\newcommand{\bd}{\partial}
\newcommand{\frbd}[1]{\frac{\bd}{\bd #1}}
\newcommand{\so}{{\mathfrak s\mathfrak o}}
\newtheorem{proposition}{Proposition}
\newtheorem{theorem}[proposition]{Theorem}
\newtheorem{definition}[proposition]{Definition}
\newtheorem{lemma}[proposition]{Lemma}
\newtheorem{example}[proposition]{Example}
\theoremstyle{remark}
\newtheorem{remark}[proposition]{Remark}
\begin{document}

\bibliographystyle{alpha}

\title[Non-commutative $b$-integrable systems]{Non-commutative integrable systems on $b$-symplectic manifolds}

\author{Anna Kiesenhofer}
\address{Department of Mathematics, Universitat Polit\`ecnica de Catalunya, EPSEB, Avinguda del Doctor Mara\~{n}\'{o}n 44--50, Barcelona, Spain}
\email{anna.kiesenhofer@upc.edu}
\author{Eva Miranda}
\address{Deparment of Mathematics,
 Universitat Polit\`{e}cnica de Catalunya and BGSMath, EPSEB, Avinguda del Doctor Mara\~{n}\'{o}n 44--50, Barcelona, Spain}
\email{eva.miranda@upc.edu}
\thanks{Anna Kiesenhofer is supported by AGAUR FI doctoral grant. Anna Kiesenhofer and Eva Miranda are  supported by the  Ministerio de Econom\'{\i}a y Competitividad project with reference MTM2015-69135-P (MINECO-FEDER) and by the Generalitat de Catalunya project with reference 2014SGR634 (AGAUR) }

\date{\today}
\begin{abstract}
In this paper we study non-commutative integrable systems on  $b$-Poisson manifolds.  One important source of examples (and motivation) of such systems comes from considering non-commutative systems on manifolds with boundary having the right asymptotics on the boundary.  In this paper we describe this and other examples and we prove an action-angle theorem for non-commutative integrable systems on a  $b$-symplectic manifold in a neighbourhood of a Liouville torus inside the critical set of the Poisson structure associated to the $b$-symplectic structure.
\end{abstract}
\maketitle

%---------------------------------------------------------------------------------------------------------------------------------------------------------------------------
%---------------------------------------------------------------------------------------------------------------------------------------------------------------------------

\section{Introduction}
A non-commutative integrable system on a symplectic manifold with boundary yields a non-commutative system on a class of Poisson manifolds called $b$-Poisson manifolds, as long as the asymptotics of the system satisfy certain conditions near the boundary.  $b$-Poisson manifolds constitute a class of Poisson manifolds which recently has been   studied extensively(see for instance \cite{Guillemin2011}, \cite{Guillemin2012}, \cite{Guillemin2013} and \cite{gualtierietal}) and integrable systems on such manifolds have been the object of study in \cite{KMS}, \cite{km} and \cite{DKM}.

 In \cite{Laurent-Gengoux2010}  an action-angle coordinate for Poisson manifolds is proved on a neighbourhood of a regular Liouville torus. This theorem cannot be applied to a neighborhood of a Liouville torus contained inside the critical set of the Poisson structure where the rank of the bivector field is no longer maximal. In this paper we extend the techniques in \cite{Laurent-Gengoux2010} to consider  a neighbhourhood of a Liouville torus inside the critical set of a $b$-Poisson manifolds thus proving an action-angle theorem for non-commutative systems on $b$-Poisson manifolds.

The action-angle theorem for non-commutative integrable systems for symplectic manifolds was proved by Nehoroshev in \cite{nekhoroshev}. Our proof follows a combination of techniques from   \cite{Laurent-Gengoux2010} with techniques native to $b$-symplectic geometry. As in \cite{Laurent-Gengoux2010} the key point of the proof is to find a torus action attached to a non-commutative integrable system and extend the Darboux-Carath\'{e}odory coordinates in a neighbourhood of the invariant subset. The upshot is the use of $b$-symplectic techniques and toric actions on these manifolds \cite{Guillemin2013}, \cite{gmps2} as we did in \cite{KMS} and \cite{km} for commutative systems on $b$-manifolds. The proof is a combination of the theory of torus actions with a refinement of the commutative proof by considering Cas-basic forms and working with them as a subcomplex of the $b$-De Rham complex. The action-angle theorem for commutative integrable systems on $b$-symplectic manifolds  yields semilocal models as twisted cotangent lifts (see \cite{km}). It is also possible to visualize the action-angle theorem for non-commutative systems using twisted cotangent lifts.

The organization of this paper is as follows: In Section 2 we introduce the basic tools that will be needed in this paper. In Section 3 we provide a list of examples which includes non-commutative systems on symplectic manifolds with boundary and examples obtained from group actions including twisted b-cotangent lifts. We end this section exploring the Galilean group as a source of non-commutative examples in $b$-symplectic manifolds. In Section 4 we state and prove  the action-angle coordinate theorem for $b$-symplectic manifolds.
\section{Preliminaries}\label{sec:pre}

\subsection{Integrable systems and action-angle coordinates on Poisson manifolds}

A Poisson manifold is a pair $(M, \Pi)$ where $\Pi$ is a bivector field such that the associated bracket on functions
$$\{f,g\}:=\Pi(df,dg), \quad f,g:M\to \R$$
satisfies the Jacobi identity.
The Hamiltonian vector field of a function $f$ is defined as
$X_f := \Pi(df,\cdot)$. This allows us to formulate equations of motion just as in the symplectic setting, i.e. given a Hamiltonian function $H$ we consider the flow of the vector field $X_H$. The concept of integrable systems is well understood in the symplectic context. A similar definition is possible in the Poisson setting and the famous Arnold-Liouville-Mineur theorem on the semilocal structure of integrable systems has its analogue in the Poisson context. Both commutative and non-commutative integrable systems on Poisson manifolds were studied in \cite{Laurent-Gengoux2010}.

\begin{definition}[Non-commutative integrable system on a Poisson manifold]\label{def:noncomintsyspoisson}
 Let $(M,\Pi)$ be a Poisson manifold of (maximal) rank $2r$. An $s$-tuple of functions
  $F=(f_1,\dots,f_s)$ on $M$ is a {\bf non-commutative (Liouville) integrable system} of rank $r$ on $(M,\Pi)$ if
\begin{enumerate}
  \item[(1)] $f_1,\dots,f_s$ are independent (i.e. their differentials are independent on a dense open subset of $M$);
  \item[(2)] The functions $f_1,\dots,f_r$ are in involution with the functions $f_1,\dots,f_s$;
  \item[(3)] $r+s =\dim M$;
  \item[(4)] The Hamiltonian vector fields of the functions $f_1, \dots,f_r$ are linearly independent at some point of $M$.
\end{enumerate}
  Viewed as a map, $F:M\to\R^s$ is called the {\bf momentum map} of $(M,\Pi,F)$.
\end{definition}

 When all the integrals commute, i.e. $r=s$, then we are dealing with the conventional case of a commutative integrable system.

\begin{example}[A generic example]
 Consider  the manifold $\T^r \times \R^{s}$  with coordinates
 $$(\theta_1,\dots,\theta_r,p_1,\dots,p_{r},z_1 ,\dots, z_{s-r} )$$
 equipped with the Poisson structure
 $$ \Pi=\sum_{i=1}^r \pp{}{\theta_i}\we\pp{}{p_i} + \pi'$$
 where $\pi'$ is any Poisson structure on  $\R^{s-r}$. Then the functions
 $$(p_1,\ldots,p_r,z_1,\ldots,z_s)$$
 define a non-commutative integrable system of rank $r$.
\end{example}

 As we will see in Theorem \ref{thm:action-angle_intro} below, any non-commutative integrable system semilocally takes this form, more precisely in the neighborhood of a  regular compact connected level set of its integrals $(f_1,\ldots,f_s)$.

%%%%%%%%%%%%%%%%%%%
\subsubsection{Standard Liouville tori}\label{par:tori}

Let $(M,\Pi,F)$ be a non-commutative integrable system of rank $r$.
We denote the non-empty subset of $M$ where the differentials $d f_1,\dots,d f_s$ (resp.\ the Hamiltonian vector
fields $X_{f_1}, \dots,X_{f_r}$) are independent by~$\SU_F$ (resp.\ $M_{F,r}$).

On the non-empty open subset $M_{F,r}\cap\U_F $ of~$M$, the Hamiltonian vector fields
$X_{f_1},\dots, X_{f_r}$ define an integrable distribution of rank $r$ and hence a foliation ${\mathcal F} $ with $r$-dimensional leaves, see \cite{Laurent-Gengoux2010}.

We will only deal with the case where $\SF_m$ is compact.  Under
this assumption, $\SF_m$ is a compact $r$-dimensional manifold, equipped with $r$ independent commuting vector
fields, hence it is diffeomorphic to an $r$-dimensional torus $\T^r$. The set $\SF_m$ is called a \emph{standard
  Liouville torus} of $F $.

The action-angle coordinate theorem  proved in  \cite{Laurent-Gengoux2010} (Theorem 1.1) gives a semilocal description of the Poisson structure around a standard Liouville torus of a non-commutative integrable system:

\begin{theorem}[{\bf Action-angle coordinate theorem for non-commutative integrable systems on Poisson manifolds}]\label{thm:action-angle_intro}
  Let $(M,\Pi,F)$ be a non-commutative integrable system of rank
  $r$, where $F=(f_1,\dots,f_s)$ and suppose that
  $\SF_m$ is a standard Liouville torus, where $m\in M_{F,r}\cap\U_F$.  Then there exist ${\R} $-valued smooth
  functions $(p_1,\dots, p_r,z_1, \dots, z_{s-r} )$ and $ {\R}/{\Z}$-valued smooth functions
  $({\theta_1},\dots,{\theta_r})$, defined in a neighborhood $U$ of $\SF_m$, and functions
  $\phi_{kl}=-\phi_{lk}$, which are independent of $\theta_1,\dots,\theta_r,p_1,\dots,p_r$, such that
  \begin{enumerate}
    \item The functions $(\theta_1,\dots,\theta_r,p_1,\dots,p_{r},z_1 ,\dots, z_{s-r} )$ define a diffeomorphism
            $U\simeq\T^r\times B^{s}$;
    \item The Poisson structure can be written in terms of these coordinates as,
    \begin{equation*}
       \Pi=\sum_{i=1}^r \pp{}{\theta_i}\we\pp{}{p_i} + \sum_{k,l=1}^{s-r} \phi_{kl}(z) \pp{}{z_k}\we\pp{}{z_l} ;
    \end{equation*}
    \item The leaves of the surjective submersion $F=(f_1,\dots,f_{s})$ are given by the projection onto the
      second component $\T^r \times B^{s}$, in particular, the functions $f_1,\dots,f_s$ depend on
      $p_1,\dots,p_r,z_1, \dots, z_{s-r}$ only.
  \end{enumerate}
  The functions $\theta_1,\dots,\theta_{r}$ are called \emph{angle coordinates}, the functions $p_1,\dots,p_r$
  are called \emph{action coordinates} and the remaining coordinates $z_1,\dots,z_{s-r}$ are called
  \emph{transverse coordinates}.
\end{theorem}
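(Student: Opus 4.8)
The plan is to run the non-commutative Arnold--Liouville--Mineur argument in the Poisson category: manufacture a torus action from the commuting flows, extract action coordinates as its Hamiltonians by a leafwise period computation, and glue local Darboux--Carath\'eodory coordinates into global angle and transverse coordinates.

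First I would use the involutivity hypotheses. Because $f_1,\dots,f_r$ are in involution with $f_1,\dots,f_s$, the fields $X_{f_1},\dots,X_{f_r}$ commute, are tangent to the fibers of $F$, and are independent on $M_{F,r}\cap\U_F$; since on $\U_F$ these fibers are exactly $r$-dimensional, the compact connected leaf $\SF_m\iso\T^r$ is a connected component of a fiber of $F$. The commuting complete fields generate a locally free $\R^r$-action whose orbits are the Liouville tori, with a period lattice varying smoothly over the base. Choosing a smooth lattice basis yields fields $Y_1,\dots,Y_r$ generating a free $\T^r=\R^r/\Z^r$-action on a neighborhood $U$ of $\SF_m$.

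Next I would produce the action coordinates. The symplectic leaf through $m$ has dimension $2r$, and since $\{f_i,f_j\}=0$ for $i,j\le r$ the torus $\SF_m$ sits inside it as a Lagrangian submanifold; leafwise we are therefore in the classical Arnold--Liouville situation. Integrating a leafwise Liouville $1$-form over a basis of cycles of the torus produces functions $p_1,\dots,p_r$ whose Hamiltonian vector fields are precisely the uniformized generators $Y_i=X_{p_i}$; the obstruction to this construction lives in $H^1(\T^r)\iso\R^r$ and is resolved exactly as in the symplectic case. As the $p_i$ are invariant under the torus action, whose orbits are the fibers of $F$, they are functions of $F$ alone, hence pairwise in involution and Poisson-commuting with every $f_j$. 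Fixing $m_0\in\SF_m$, I then apply the Poisson Darboux--Carath\'eodory theorem to the involutive family $p_1,\dots,p_r$ to obtain local coordinates $(q_1,\dots,q_r,p_1,\dots,p_r,z_1,\dots,z_{s-r})$ centered at $m_0$ in which \[\Pi=\sum_{i=1}^r\pp{}{q_i}\we\pp{}{p_i}+\sum_{k,l=1}^{s-r}\phi_{kl}(z)\pp{}{z_k}\we\pp{}{z_l},\] with the $z_k$ transverse to the leaves and $\phi_{kl}$ a function of $z$ only.

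Finally I would globalize. The local $q_i$ need not survive around the whole torus, so I instead define the angle coordinates $\theta_1,\dots,\theta_r$ as the $\R/\Z$-valued time parameters of the flows of $Y_i=X_{p_i}$ measured from a section of the torus bundle; freeness and periodicity make them well defined on $U$, and near $m_0$ they agree with the $q_i$. The relations $\{\theta_i,p_j\}=\delta_{ij}$ and $\{\theta_i,\theta_j\}=0$ hold near $m_0$ and propagate across $U$ because the flows of the $X_{p_i}$ preserve $\Pi$ and act on the $\theta$'s by translation. The main obstacle is the transverse block: I must pick the $z_k$ globally over the torus so that $\{\theta_i,z_k\}=0$ and so that the residual bracket is $\{z_k,z_l\}=\phi_{kl}(z)$ with no $\theta$- or $p$-dependence. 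I would handle this by averaging the locally defined $z_k$ over the $\T^r$-action, forcing them to descend to the base of the torus bundle, and by checking via the same $H^1(\T^r)$-vanishing that this can be done without spoiling their independence, their transversality to the leaves, or the normal form of $\Pi$. This compatibility check is the delicate heart of the proof; once in place, the map $(\theta,p,z)$ furnishes the diffeomorphism $U\iso\T^r\times B^s$ and conclusions (1)--(3).
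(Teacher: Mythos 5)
Note first that the paper does not prove Theorem~\ref{thm:action-angle_intro}: it is quoted from \cite{Laurent-Gengoux2010}, and the paper's own contribution is the $b$-analogue (Theorem~\ref{thm:action-anglenc}), whose proof follows the same scheme as \cite{Laurent-Gengoux2010}. Your architecture --- uniformization of periods, action coordinates generating the torus action, Darboux--Carath\'eodory, propagation by the flows --- is the right one, but there are concrete gaps at exactly the points where the non-commutative setting differs from the commutative one. The most serious is the inference ``the $p_i$ are functions of $F$ alone, hence pairwise in involution and Poisson-commuting with every $f_j$.'' This is false for a non-commutative system: $f_{r+1},\dots,f_s$ are themselves functions of $F$ alone and do not commute with each other. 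What you actually need is that the $p_i$ are \emph{Cas-basic}, i.e.\ that they push down to Casimirs of the induced bracket on the base; this is precisely the content of Lemma~\ref{lem:per} (Claim~1 of \cite{Laurent-Gengoux2010}): the coefficients $\lambda_i^j$ of a period-one combination $Y_i=\sum_j\lambda_i^j X_{f_j}$ with $F$-basic coefficients are automatically Cas-basic. Without isolating this, you cannot justify that the transverse block $\phi_{kl}$ depends on $z$ only, nor that the $z_k$ can be chosen to commute with the $\theta_i$ and $p_i$.

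Two further steps are asserted rather than proved. First, your construction of the $p_i$ by integrating a leafwise Liouville one-form requires (i) a primitive of the leafwise symplectic form near the Lagrangian torus depending smoothly on the transverse parameters, and (ii) the identity $X_{p_i}=Y_i$, which is the actual content of the Mineur argument; neither is supplied, and the reference proof avoids both by instead proving $\mathcal{L}_{Y_i}\Pi=0$ via the period-one lemma (Lemma~\ref{lem:poisson}) and then exhibiting explicit primitives with a homotopy operator adapted to the retraction onto the torus, which makes the Cas-basic property of the $p_i$ manifest (compare the computation of $I(\alpha_i)$ in the proof of Theorem~\ref{thm:action-anglenc}). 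Second, the ``delicate heart'' you defer --- arranging $\{\theta_i,z_k\}=0$ and $\{z_k,z_l\}=\phi_{kl}(z)$ globally over the torus --- is in fact the easy part once the previous points are in place: in the Darboux--Carath\'eodory chart one has $X_{p_i}=\partial/\partial\theta_i$, so the transverse coordinates are annihilated by the $X_{p_i}$, hence constant along orbits and extendable to the saturated neighborhood by invariance; no averaging is needed, and averaging as you propose could destroy their independence and the normal form. The bracket relations then propagate over the whole saturated neighborhood by flowing with the $X_{p_i}$, exactly as in the last step of the proof of Theorem~\ref{thm:action-anglenc}.
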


%%%%%%%%%%%%%%%%%%%%%%%%%%%%%%%

% ----------

\subsection{$b$-Poisson and $b$-symplectic manifolds}

A symplectic form $\omega$ induces a Poisson structure $\Pi$ defined via
$$\Pi(df,dg)= \omega(X_f,X_g)$$
where $X_f, X_g$ are the Hamiltonian vector fields defined with respect to $\omega$. On the other hand, a Poisson structure which does not have full rank everywhere, i.e. the set of Hamiltonian vector fields spans the tangent space at every point, does not induce a symplectic structure.  However, if the Poisson structure drops rank in a controlled way as defined below, it is possible to associate a so-called $b$-symplectic structure.

\begin{definition}[$b$-Poisson structure]\label{definition:firstb}
Let $(M^{2n},\Pi)$ be an oriented Poisson manifold. If the map
$$p\in M\mapsto(\Pi(p))^n\in\bigwedge^{2n}(TM)$$
is transverse to the zero section, then $\Pi$ is called a \textbf{$b$-Poisson structure} on $M$. The hypersurface $Z=\{p\in M|(\Pi(p))^n=0\}$ is the \textbf{critical hypersurface} of $\Pi$. The pair $(M,\Pi)$ is called a \textbf{$b$-Poisson manifold}.
\end{definition}

It is possible and convenient to work in the ``dual" language of forms instead of bivector fields. The object equivalent to a $b$-Poisson structure will be a $b$-symplectic structure. To define $b$-symplectic structures and, in general, $b$-forms we introduce the concept of $b$-manifolds and the $b$-tangent bundle associated to the critical set $Z$:

\begin{definition}
A \textbf{$b$-manifold} is a pair $(M,Z)$ of an oriented manifold $M$ and an oriented hypersurface $Z\subset M$.
A \textbf{$b$-vector field} on a $b$-manifold $(M,Z)$ is a vector field which is tangent to $Z$ at every point $p\in Z$.
\end{definition}

The set of $b$-vector fields is a Lie subalgebra of the algebra of all vector fields on $M$. Moreover, if $x$ is a local defining function for $Z$ on some open set $U\subset M$ and $(x,y_1,\ldots,y_{N-1})$ is a chart on $U$, then the set of $b$-vector fields on $U$ is a free $C^\infty(M)$-module with basis $(x \frbd{x}, \frbd{y_1},\ldots, \frbd{y_N}). $
A locally  $C^\infty(M)$-module  has a vector bundle associated to it. We call the vector bundle associated to the sheaf of $b$-vector fields the \textbf{$b$-tangent bundle} denoted $^b TM$. The \textbf{$b$-cotangent bundle}  $^b T^*M$ is, by definition, the vector bundle dual to $^b TM$.

Given a defining function $f$ for $Z$, let $\mu\in\Omega^1(M\setminus Z)$ be the one-form $\frac{df}{f}$. If $v$ is a $b$-vector field then the pairing $\mu(v)\in C^\infty(M\setminus Z)$ extends smoothly over $Z$ and hence $\mu$ itself extends smoothly over $Z$ as a section of $^b T^*M$.
We will write $\mu=\frac{df}{f}$, keeping in mind that on $Z$ the expression only makes sense when evaluated on $b$-tangent vectors.

\begin{definition}[$b$-de Rham-$k$-forms]
The sections of the vector bundle $\Lambda^k(^b T^*M)$ are called   \textbf{$b$-$k$-forms} ($b$-de Rham-$k$-forms) and the sheaf of these forms is denoted $^b\Omega^k(M)$.
\end{definition}

For $f$ a defining function of $Z$ every $b$-$k$-form can be written as
\begin{equation}\label{eq:bDeRham}
\omega=\alpha\wedge\frac{df}{f}+\beta, \text{ with } \alpha\in\Omega^{k-1}(M) \text{ and } \beta\in\Omega^k(M).
\end{equation}

The decomposition \eqref{eq:bDeRham} enables us to extend the exterior $d$ operator to $^b\Omega^k(M)$ by setting
$$d\omega=d\alpha\wedge\frac{df}{f}+d\beta.$$
The right hand side is well defined and agrees with the usual exterior $d$ operator on $M\setminus Z$ and also extends smoothly over $M$ as a section of $\Lambda^{k+1}(^b T^*M)$. Since we have $d^2=0$, we can define the differential complex of $b$-forms, the $b$-de Rham complex.

\begin{definition}
Let $(M^{2n},Z)$ be a $b$-manifold and $\omega\in\,^b\Omega^2(M)$ a closed $b$-form. We say that $\omega$ is \textbf{$b$-symplectic} if $\omega_p$ is of maximal rank as an element of $\Lambda^2(\,^b T_p^* M)$ for all $p\in M$.
\end{definition}

It was shown in \cite{Guillemin2012} that $b$-symplectic and $b$-Poisson manifolds are in one-to-one correspondence. \\

The classical Darboux theorem for symplectic manifolds has its analogue in the $b$-symplectic case:

\begin{theorem}[{\bf $b$-Darboux theorem} \cite{Guillemin2012}]\label{thm:bdarboux}
Let $(M,Z, \omega)$ be a $b$-symplectic manifold. Let $p \in Z$ be a point and $z$ a local defining function for $Z$. Then, on a neighborhood of $p$ there exist coordinates $(x_1,y_1,\dots ,x_{n-1},y_{n-1}, z, t)$ such that
$$\omega=\sum_{i=1}^{n-1} dx_i\wedge dy_i+\frac{1}{z}\,dz\wedge dt.$$
\end{theorem}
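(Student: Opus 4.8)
The plan is to reduce the statement to two classical normal-form results---the Darboux theorem for cosymplectic structures on the critical hypersurface $Z$ and Moser's path method---both carried out inside the $b$-de Rham complex rather than the ordinary one.

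First I would use the structural decomposition \eqref{eq:bDeRham} for the \emph{given} defining function $z$: write $\omega=\alpha\wedge\frac{dz}{z}+\beta$ with $\alpha\in\Omega^1(M)$ and $\beta\in\Omega^2(M)$ smooth near $p$. Since $\frac{dz}{z}$ is a closed $b$-form, $d\omega=d\alpha\wedge\frac{dz}{z}+d\beta$, and by uniqueness of the decomposition the hypothesis $d\omega=0$ forces $d\alpha=0$ and $d\beta=0$. Expanding $\omega^n=\beta^n+n\,\alpha\wedge\frac{dz}{z}\wedge\beta^{n-1}$ and remembering that the purely smooth term $\beta^n$ carries a factor $z$ when written in a $b$-coframe (hence vanishes along $Z$), the maximal-rank condition becomes $\alpha\wedge\frac{dz}{z}\wedge\beta^{n-1}\ne 0$ on a neighbourhood of $p$. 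Restricting through the inclusion $j\colon Z\hookrightarrow M$, this says that $j^*\alpha\wedge(j^*\beta)^{n-1}$ is a volume form on $Z^{2n-1}$, so $(j^*\alpha,j^*\beta)$ is a cosymplectic structure on $Z$.

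Next I would apply the Darboux theorem for cosymplectic manifolds at $p\in Z$ to obtain coordinates $(x_1,y_1,\dots,x_{n-1},y_{n-1},t)$ on a neighbourhood of $p$ in $Z$ with $j^*\beta=\sum_i dx_i\wedge dy_i$ and $j^*\alpha=dt$ (flipping the sign of $t$ if needed). I then extend $x_i,y_i,t$ off $Z$ while keeping the prescribed $z$ as the last coordinate---for instance by declaring them constant along the flow of a vector field transverse to $Z$ normalized by $dz(\cdot)=1$---so that $(x_1,y_1,\dots,x_{n-1},y_{n-1},z,t)$ is a genuine chart. In this chart set $\omega_0=\sum_i dx_i\wedge dy_i+\frac{1}{z}\,dz\wedge dt$, with the sign of $t$ chosen so that the $z$-decomposition of $\omega_0$ reproduces $(\alpha,\beta)$ on $Z$. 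By construction $\omega$ and $\omega_0$ are $b$-symplectic and induce the same cosymplectic data on $Z$; hence the components of the closed $b$-$2$-form $\omega-\omega_0$ restrict to zero along $Z$. Now interpolate: put $\omega_s=(1-s)\omega_0+s\,\omega$, note $\omega_s(p)=\omega_0(p)$ is nondegenerate so $\omega_s$ is $b$-symplectic for all $s\in[0,1]$ on a possibly smaller neighbourhood, invoke a relative $b$-Poincar\'e lemma to write $\omega-\omega_0=d\sigma$ with $\sigma$ a $b$-$1$-form vanishing suitably on $Z$, solve $\iota_{X_s}\omega_s=-\sigma$ for a time-dependent $b$-vector field $X_s$, and integrate its flow $\psi_s$ to get $\psi_1^*\omega=\omega_0$; pulling the chart back by $\psi_1$ produces coordinates in which $\omega$ has the asserted form.

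The main obstacle is this Moser step. Because $\frac{dz}{z}$ blows up along $Z$, the homotopy, the Poincar\'e lemma, and the integration of $X_s$ must all be executed within the $b$-de Rham complex, and the crucial point is to verify that the solution $X_s$ of $\iota_{X_s}\omega_s=-\sigma$ is a genuine $b$-vector field---tangent to $Z$---so that its flow preserves $Z$ and keeps $z$ a defining function; the nondegeneracy of $\omega_s$ \emph{as a $b$-form} is precisely what guarantees that $X_s$ is smooth up to and along $Z$. The final delicate point is to arrange $\sigma$ (equivalently $X_s$, e.g.\ with $dz(X_s)=0$) so that the isotopy fixes the prescribed coordinate $z$ rather than merely some other defining function of $Z$.
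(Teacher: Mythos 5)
Note first that the paper does not prove this statement: it is recalled verbatim from \cite{Guillemin2012}, so there is no internal proof to compare against. Your outline is essentially the argument used in that reference: read off the induced cosymplectic structure on $Z$ from the decomposition $\omega=\alpha\wedge\frac{dz}{z}+\beta$, put it in Darboux form on $Z$, extend to a model form $\omega_0$, and run a relative Moser path argument inside the $b$-de Rham complex, with $b$-nondegeneracy of $\omega_s$ guaranteeing that the Moser vector field is a genuine $b$-vector field tangent to $Z$. The skeleton is sound, and your identification of the Moser step as the crux is correct. Two points should be tightened in a full write-up. First, the decomposition \eqref{eq:bDeRham} is \emph{not} unique ($\alpha$ is only determined modulo $f\,dz+z\gamma$), so $d\omega=0$ does not literally force $d\alpha=0$ and $d\beta=0$; what it does give is that the well-defined restrictions $j^*\alpha$ and $j^*\beta$ are closed (equivalently, the decomposition can be re-chosen with $\alpha$ closed), which is all the cosymplectic step requires. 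Second, once the cosymplectic data of $\omega$ and $\omega_0$ agree along a neighbourhood of $p$ in $Z$, the difference $\omega-\omega_0$ vanishes along $Z$ as a section of $\Lambda^2({}^b T^*M)$ (and is in fact smooth), so the relative Poincar\'e lemma yields a primitive $\sigma$ vanishing along $Z$; hence $X_s$ vanishes on $Z$, its time-one flow exists near $p$ and fixes $Z$ pointwise. If that flow does not preserve the prescribed function $z$ exactly, one recovers it afterwards by the elementary observation that $\frac{dz'}{z'}=\frac{dz}{z}+dg$ for $z'=e^g z$, with the discrepancy absorbed into a redefinition of $t$; so the delicate point you flag is real but easily closed. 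Neither issue is a gap in the method, only in the level of detail.
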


The cohomology of the $b$-de Rham complex, whose groups are denoted by  $^b H^*(M)$, can be understood from the classic de Rham cohomologies of $M$ and $Z$ via the Mazzeo-Melrose theorem:

\begin{theorem}[{\bf{Mazzeo-Melrose}}]\label{thm:mazzeomelrose}
The $b$-cohomology groups of $M^{2n}$ satisfy
$$^b H^*(M)\cong H^*(M)\oplus H^{*-1}(Z).$$
\end{theorem}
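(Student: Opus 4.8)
The plan is to realize the $b$-de Rham complex as an extension of the ordinary de Rham complex of $M$ by a shifted copy of that of $Z$, and then to show that the associated long exact sequence degenerates. First I would introduce the \emph{residue map}. Using the decomposition \eqref{eq:bDeRham}, each $b$-$k$-form $\omega=\alpha\wedge\frac{df}{f}+\beta$ determines a form $\operatorname{Res}(\omega):=\alpha|_Z\in\Omega^{k-1}(Z)$. I would check that this is independent of the defining function (replacing $f$ by $gf$ with $g>0$ changes $\frac{df}{f}$ by the smooth form $\frac{dg}{g}$, which is absorbed into $\beta$) and that it commutes with $d$, since $d\omega=d\alpha\wedge\frac{df}{f}+d\beta$ gives $\operatorname{Res}(d\omega)=d(\alpha|_Z)$. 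The kernel of $\operatorname{Res}$ is exactly the subcomplex of genuine smooth forms $\Omega^{k}(M)$ (a $b$-form with $\alpha|_Z=0$ can be rewritten with a smooth $df\wedge(\cdot)$ term), and $\operatorname{Res}$ is surjective, since extending $\eta\in\Omega^{k-1}(Z)$ to a collar via the projection $\pi$ and cutting off produces $\chi\,\frac{df}{f}\wedge\pi^*\eta$ with residue $\eta$. This gives a short exact sequence of cochain complexes
$$0\longrightarrow\Omega^*(M)\longrightarrow {}^b\Omega^*(M)\xrightarrow{\operatorname{Res}}\Omega^{*-1}(Z)\longrightarrow 0.$$

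Next I would pass to the induced long exact sequence in cohomology,
$$\cdots\to H^k(M)\to {}^bH^k(M)\xrightarrow{\operatorname{Res}} H^{k-1}(Z)\xrightarrow{\delta} H^{k+1}(M)\to\cdots,$$
at which point the entire statement reduces to showing that the connecting homomorphism $\delta$ vanishes: since we work over $\R$, the resulting short exact sequences $0\to H^k(M)\to {}^bH^k(M)\to H^{k-1}(Z)\to 0$ split, yielding $^bH^k(M)\cong H^k(M)\oplus H^{k-1}(Z)$.

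The crux is the computation of $\delta$. I would work in an oriented collar $U\cong Z\times(-\e,\e)$ in which $f$ is the collar coordinate, write $\pi\colon U\to Z$ for the projection, and take a cutoff $\chi$ equal to $1$ near $Z$ and supported in $U$. For $\eta$ closed on $Z$, the $b$-form $\omega:=\chi\,\frac{df}{f}\wedge\pi^*\eta$ is a lift of $[\eta]$, and $\delta[\eta]=[d\omega]$ with $d\omega=d\chi\wedge\frac{df}{f}\wedge\pi^*\eta$, a genuinely smooth closed $(k+1)$-form because $d\chi$ is supported away from $Z$, where $\frac{df}{f}$ is smooth. The key observation is that on $M\setminus Z$ one has $\frac{df}{f}=d(\log|f|)$, whence
$$d\omega=-\,d\bigl(\log|f|\;d\chi\wedge\pi^*\eta\bigr),$$
and the primitive $\log|f|\,d\chi\wedge\pi^*\eta$ is a bona fide smooth form on all of $M$, since it is supported on $\{d\chi\neq0\}$, a region away from $Z$. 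Hence $\delta[\eta]=0$.

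I expect the main obstacle to be precisely this last step: a naive lift of a closed form on $Z$ to a $b$-form fails to be closed, and one must verify that its differential is not merely closed but \emph{exact} in $H^*(M)$. The $\log|f|$ device settles this cleanly by exploiting that the singularity of $\frac{df}{f}$ is confined to $Z$ while the cutoff error $d\chi$ lives away from $Z$, so the two never interact. The only global input required is the existence of an oriented collar of $Z$, which follows from $M$ and $Z$ being oriented (the normal bundle of $Z$ is trivial), and this is what legitimizes writing $\pi^*\eta$ and the closed local model $\frac{df}{f}\wedge\pi^*\eta$.
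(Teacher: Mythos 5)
The paper offers no proof of this statement: it quotes the Mazzeo--Melrose theorem as a known result (proved in \cite{Guillemin2012}, and originally via Melrose's $b$-calculus), so there is nothing internal to compare against. Your argument is correct and is essentially the standard one from that literature: the short exact sequence of complexes $0\to\Omega^*(M)\to{}^b\Omega^*(M)\xrightarrow{\operatorname{Res}}\Omega^{*-1}(Z)\to 0$, the vanishing of the connecting homomorphism via the smooth primitive $\log|f|\,d\chi\wedge\pi^*\eta$ (which works precisely because $d\chi$ is supported away from $Z$, where $\log|f|$ is smooth), and the splitting of the resulting short exact sequences of real vector spaces. The one point you should make explicit is the well-definedness of $\operatorname{Res}$ for a \emph{fixed} defining function $f$: in the decomposition $\omega=\alpha\wedge\frac{df}{f}+\beta$ the form $\alpha$ is determined only modulo forms of the shape $f\gamma+df\wedge\delta$, and one must note that these pull back to zero on $Z$ since $i_Z^*f=0$ and hence $i_Z^*df=0$ (equivalently, define $\operatorname{Res}(\omega)$ intrinsically as $i_Z^*\bigl(\iota_{f\partial/\partial f}\,\omega\bigr)$). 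The global collar and global defining function you invoke are available because the paper's definition of a $b$-manifold requires both $M$ and $Z$ oriented, so the normal bundle of $Z$ is trivial; with these remarks the proof is complete.
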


Under the Mazzeo-Melrose isomorphism, a $b$-form of degree $p$ has two parts: its first summand, the \emph{smooth} part, is determined (by Poincar\'{e} duality) by integrating the form along any $p$-dimensional cycle transverse to $Z$ (such an integral is improper due to the singularity along $Z$, but the principal value of this integral is well-defined). The second summand, the \emph{singular} part, is the residue of the form along $Z$.

\subsection{$b$-functions}

 It is convenient to enlarge the set of smooth functions to the set of \textbf{$b$-functions} $^b C^\infty(M)$, so that the $b$-form $\frac{df}{f}$ is exact, where $f$ is a defining function for $Z$. We define a $b$-function to be a function on $M$ with values in $\R \cup \{\infty\}$ of the form
$$c\,\textrm{log}|f| + g,$$
 where $c \in \mathbb{R}$ and $g$ is a smooth function. For ease of notation, from now on we identify $\R$ with the completion $\R \cup \{\infty\}$.

 We define the differential operator $d$ on this space in the obvious way:
$$d(c\,\textrm{log}|f| + g):= \frac{c \, df}{f} + d g \in\, ^b\Omega^1(M),$$
where $d g$ is the standard de Rham derivative.

As in the smooth case, we define the ($b$-)Hamiltonian vector field of a $b$-function $f\in ^b C^\infty(M)$ as the (smooth) vector field $X_f$ satisfying
$$\iota_{X_f} \omega = -df.$$

Obviously, the flow of a $b$-Hamiltonian vector field preserves the $b$-symplectic form and hence the Poisson structure, so $b$-Hamiltonian vector fields are in particular Poisson vector fields.

\subsection{Twisted $b$-cotangent lift}\label{sec:twisted}

Given a Lie group action on a smooth manifold $M$,
$$\rho:G\times M \to M:(g,m)\mapsto \rho_g(m)$$
 we define the cotangent lift of the action to $T^\ast M$ via the pullback:
$$\hat{\rho}:G\times ^b \! T^\ast M \to ^b \! T^\ast M : (g,p)\mapsto \rho^\ast_{g^{-1}}(p).$$
It is well-known that the lifted action $\hat \rho$ is Hamiltonian with respect to the canonical symplectic structure on $T^\ast M$ (see \cite{guilleminandsternberg}).

We want to view the lifted action as a $b$-Hamiltonian action by means of a construction first described in \cite{km}.

Consider  $T^\ast S^1$ with standard coordinates $(\theta, a)$. We endow it with the following one-form defined for $a\neq 0$, which we call the logarithmic Liouville one-form in analogy to the construction in the symplectic case:
$\lambda_{tw,c} = \log |a| d\theta$ for $a\neq 0$.

Now for any $(n-1)$-dimensional manifold $N$, let $\lambda_N$ be the classical Liouville one-form on $T^\ast N$. We endow the product $T^\ast (S^1 \times N) \cong T^\ast S^1 \times T^\ast N$ with the product structure $\lambda:= (\lambda_{tw,c}, \lambda_N)$ (defined for $a\neq 0$). Its negative differential $\omega = -d\lambda$ extends to a $b$-symplectic structure on the whole manifold and the critical hypersurface is given by $a=0$.

Let $K$ be a Lie group acting on $N$ and consider the component-wise action of $G:=S^1\times K$ on $M:=S^1 \times N$ where $S^1$ acts on itself by rotations. We lift this action to $T^\ast M$ as described above. This construction, where $T^\ast M$ is endowed with the $b$-symplectic form $\omega$, is called the \textbf{twisted $b$-contangent lift}.

If $(x_1,\ldots,x_{n-1})$ is a chart on $N$ and $(x_1,\ldots,x_{n-1},y_1,\ldots,y_{n-1})$ the corresponding chart on $T^\ast N$ we have the following local expression for $\lambda$
\begin{equation*}\label{logliouvilleform}
\lambda = \log |a| d\theta + \sum_{i=1}^{n-1} y_i dx_i .
\end{equation*}

Just as in the symplectic case, this action is Hamiltonian with moment map given by contracting the fundamental vector fields with the Liouville one-form $\lambda$.

\section{Non-commutative $b$-integrable systems}\label{sec:noncommutativeb}

In  \cite{KMS} we introduced a definition of integrable systems for $b$-symplectic manifolds, where we allow the integrals to be $b$-functions. Such a ``$b$-integrable system'' on a $2n$-dimensional manifold consists of $n$ integrals, just as in the symplectic case. Here we introduce the definition for the more general non-commutative case:

\begin{definition}[Non-commutative $b$-integrable system]\label{def:bintsys}
A  non-commutative $b$-integrable system of rank $r$ on a $2n$-dimensional $b$-symplectic manifold $(M^{2n},\omega)$ is an $s$-tuple of functions  $F=(f_1,\ldots,f_r,f_{r+1},\ldots, f_s)$ where $f_1,\ldots,f_r$ are $b$-functions and $f_{r+1},\ldots, f_s$ are smooth such that the following conditions are satisfied:

\begin{enumerate}
  \item[(1)] The differentials $df_1,\ldots,df_s$ are linearly independent as $b$-cotangent vectors on a dense open subset of $M$ and on a dense open subset of $Z$;
  \item[(2)] The functions $f_1,\dots,f_r$ are in involution with the functions $f_1,\dots,f_s$;
  \item[(3)] $r+s =2n$;
    \item[(4)] The Hamiltonian vector fields of the functions $f_1, \dots,f_r$ are linearly independent as smooth vector fields at some point of $Z$.
\end{enumerate}
\end{definition}

We call the first $r$ functions $(f_1,\ldots,f_r)$ the commuting part of the system and the last $s-r$ functions the non-commuting part.

The case $r=s=n$ where we are dealing with a commutative system was studied in \cite{KMS}.

We denote the non-empty subsets of $M$ where condition (1) resp. (4) are satisfied by~$\SU_F$ resp.\ $M_{F,r}$. The points of the intersection $M_{F,r}\cap\U_F $ are called {\it regular}. As in the general Poisson case, the Hamiltonian vector $X_{f_1},\ldots,X_{f_r}$ fields define an integrable distribution of rank $r$ on this set and we denote the corresponding foliation by $\F$. If the leaf through a point $m\in M$ is compact, then it is an $r$-torus (``{\bf Liouville torus}''), denoted $\F_m$.

\begin{remark}In the symplectic case, if the differentials $df_i (i=1,\ldots,r)$ are linearly independent at a point $p$, then also the corresponding Hamiltonian vector fields $X_{f_i}$ are independent at $p$. However, the situation is more delicate in the $b$-symplectic case. The differentials $df_i$ are $b$-one-forms. At a point $p$ where the $df_i$ are independent as $b$-cotangent vectors, the corresponding Hamiltonian vector fields $X_{f_i}$ are independent at $p$ as $b$-tangent vectors. However, for $p\in Z$ the natural map $^b TM|_p \to TZ|_p$ is not injective and therefore we cannot guarantee independence of the $X_{f_i}$ as smooth vector fields. This is why the condition (4) is needed. As an example, consider $\R^2$ with standard coordinates $(t,z)$ and $b$-symplectic structure
$$\frac{1}{t} dt \wedge dz.$$
Then the function $z$ has a differential $dz$ which is non-zero at all points of $\R^2$, but the Hamiltonian vector field of $z$ is $t \frac{\partial}{\partial t}$ and vanishes along $Z=\{ t = 0\}$. We do not allow this kind of systems in our definition, since we are interested precisely in the dynamics on $Z$ and the existence of $r$-dimensional Liouville tori there. We remark that the definition has already been given in an analogous way for general Poisson manifolds in \cite{Laurent-Gengoux2010}.
\end{remark}

%%%%%%%%%%%%%%%%%%%%%%%%%%%%%%%%%%%%%%%%
\section{Examples of (non-commutative) $b$-integrable systems}\label{subsec:examples}
%%%%%%%%%%%%%%%%%%%%%%%%%%%%%%%%%%%%%%%%
%

%%%%%%%%%%%%%%%%%%%%
\subsection{Non-commutative integrable systems on manifolds with boundary}\label{manifoldswithboundary}
In \cite{KMS} we introduced new examples of integrable systems using existing examples on manifolds with boundary. We can reproduce a similar scheme in the non-commutative case.  As a concrete example, let the manifold with boundary be $M = N  \times H_+$, where $(N, \omega_N)$ is any symplectic manifold and $H_+$ is the upper hemisphere including the equator. We endow the interior of $H_+$ with the symplectic form $\frac{1}{h} dh \wedge d\theta$, where $(h,\theta)$ are the standard height and angle coordinates and the interior of $M$ with the corresponding product structure. Now let  $(f_1, \dots, f_s)$ be a non-commutative integrable system of rank $r$ on $N$. Then on the interior of $M$ we can, for instance, define the following (smooth) non-commutative integrable system:
$$(\log|h|, f_1, \dots, f_s )$$
Taking the double of $M$ we obtain a non-commutative $b$-integrable system on $N \times S^2$.

%%%%%%%%%%%%%%%%%%%5
\subsection{Examples coming from $b$-Hamiltonian $\T^r$-actions}

In \cite{bolsinov} it is shown how to construct integrable systems from the Hamiltonian action of a Lie group $G$ on a {\it symplectic} manifold $M$: Let $\mu: M \to \mathfrak{g}^\ast$ be the moment map of the action and consider the algebra of functions on $M$ generated by $\mu$-basic functions and $G$-invariant functions. Then under certain assumptions, this algebra is {\it complete} in the sense of \cite{bolsinov}, Definition 1.1 therein. This result is the content of Theorem 2.1 in \cite{bolsinov}. In our terminology, this means that the algebra of functions admits a basis of functions $f_1,\ldots,f_s$ which form a non-commutative integrable system on $M$. The assumptions needed for this to hold are satisfied in particular when the action is proper, which is the case for any compact Lie group $G$.

This result can be used in the $b$-symplectic case to semilocally construct a non-commutative $b$-integrable system on a $b$-symplectic manifolds $M^{2n}$ with an effective Hamiltonian $\T^r$-action as follows: Let us denote the critical hypersurface of $M$ by $Z$ and  assume $Z$ is connected. Let $t$ be a defining function for $Z$. A Hamiltonian  $\T^r$-action on a $b$-symplectic manifold, by definition, satisfies that the $b$-one-form  $\iota_{X^\#}\omega$ is exact for all $X \in\mathfrak{t}$. We consider an action with the property that, moreover, for some $X \in\mathfrak{t}$ the $b$-one-form  $\iota_{X^\#}\omega$ is a genuine $b$-one-form, i.e. not smooth. Then the following  proposition proved in \cite{Guillemin2013} about the ``splitting" of the action holds: The critical hypersurface $Z$ is a product $\SL \times \S^1$, where $\SL$ is a symplectic leaf inside $Z$ and in a neighborhood of $Z$ there is a splitting of the Lie algebra $\mathfrak{t}\simeq\mathfrak{t}_{Z}\times\langle X \rangle$, which induces a splitting $\mathbb{T}^r\simeq\mathbb{T}^{r-1}_{Z}\times\mathbb{S}^1$ such that the $\mathbb{T}^{r-1}_{Z}$-action on $Z$ induces a Hamiltonian $\mathbb{T}^{r-1}_{Z}$-action on $\mathcal{L}$.  Let $\mu_{\mathcal{L}}:\mathcal{L}\to\mathfrak{t}^*_{Z}$ be the moment map of the latter. Then on a neighborhood $\mathcal{L}\times\mathbb{S}^1\times(-\varepsilon,\varepsilon)\simeq\mathcal{U}\subset M$ of $Z$ the $\T^r$-action has moment map
\begin{equation*}\begin{aligned}
\mu_{\mathcal{U}\setminus Z}:\mathcal{L}\times\mathbb{S}^1\times((-\varepsilon,\varepsilon)\setminus\{0\})&\to\mathfrak{t}^*\simeq\mathfrak{t}^*_{Z}\times\mathbb{R}\\
(\ell,\rho,t)&\mapsto(\mu_{\mathcal{L}}(\ell),c\log|t|).
\end{aligned}\end{equation*}
Let $(f_1,\ldots,f_s)$ be the non-commutative integrable system induced on $\SL$ by applying the theorem in \cite{bolsinov} to the  $\T^{r-1}$-action on $\SL$. This system has rank $r-1$. On a neighborhood $\SL \times \{-\delta < \theta < \delta\} \times  \{-\epsilon < t < \epsilon\}$ it extends to a non-commutative $b$-integrable system $(\log|t|,f_1,\ldots,f_s)$ of rank $r$. The Liouville tori of the system are the orbits of the action.

%%%%%%%%%%%%%%%%%%%%%%%%%%%%%%%%%%%%%
\subsection{The geodesic flow}

A special case of a $\T^r$-action is obtained in the case of a Riemannian manifold $M$ which is assumed to have the property that all its geodesics are closed. These manifolds are called P-manifolds. In this case the geodesics admit a common period (see e.g. \cite{besse}, Lemma 7.11); hence their flow induces an $S^1$-action on $M$. In the same way the standard cotangent lift induces a system on $T^*M$  we can use the twisted $b$-cotangent lift (see subsection \ref{sec:twisted}) to obtain a $b$-Hamiltonian $S^1$-action on $T^\ast M$ and hence a non-commutative $b$-integrable system on $T^\ast M$.
In dimension two, examples of P-manifolds are Zoll and Tannery surfaces (see Chapter 4 in \cite{besse}).

%%%%%%%%%%%%%%%%%%%%%%%%%%%%%%%%%%%%%%%%%%%%555
%%%%%%%%%%%%%%%%%%%%%%%%%%%%
\subsection{The Galilean group}

The Galilean group has its physical origin in the (non-relativistic) transformations between two reference frames which differ by relative motion at a constant velocity $b$. Together with spatial rotations and translations in time and space, this is the so-called {\it (inhomogeneous) Galilean group} $G$. We now present in detail this example as a non-commutative integrable system, see also \cite{davidandeva}.

We consider the evolution space
$$V=\R \times \R^3 \times \R^3 \ni (t,x,y),$$
where $t\in \R $ is time and $x,y \in \R^3$ are the position and velocity respectively.

The Galilean group can be viewed as a Lie subgroup of $\GL(\R, 5)$
consisting of matrices of the form
\begin{equation}\label{gal}\left( \begin{array}{ccc}
A & b & c \\
0 & 1 & e \\
0 & 0 & 1 \end{array} \right),\quad  A\in \SSO(3), b \in \R^3, c \in \R^3, e \in \R.
\end{equation}
If we denote the matrix above by $a$ then the action $a_V$ of the Galilean group on $V$ is defined as follows:
$$a_V(t,x,v)=(t^\ast, x^\ast,y^\ast)$$
where $t^\ast=t+e$, $x^\ast=Ax+bt+c$, $y^\ast=A y +b$.

%in the following way: We identify a vector in space-time $(t,x) \in \R \times \R^3$ with $(t,x,1)$ and write a Galilean transformation consisting of rotation by the matrix $A \in \SSO(n)$ and translation by $y\in \R^3$ and $s\in \R$ as
%$$\left( \begin{array}{ccc}
%A & b & c \\
%0 & 1 & e \\
%0 & 0 & 1 \end{array} \right) \left( \begin{array}{c}
%t \\
%x \\
%1 \end{array} \right) = \left( \begin{array}{c}
%t+e \\
%bt + Ax + c \\
%1 \end{array} \right).$$
%
%We denote the  group of these transformations by $\textup{Gal}(4)$.
The Lie algebra $\mathfrak{g}$ of $G$ is given by the set of matrices \cite{souriau}:
$$\left( \begin{array}{ccc}
j(\omega) & \beta & \gamma \\
0 & 0 & \epsilon \\
0 & 0 &  0\end{array} \right), \qquad \epsilon \in \R, \omega \in \R^3, \beta \in \R^3, \gamma \in \R^3.$$
%lecture16 - just copying this without understanding

Here, $j$ is the map that identifies $\R^3$ with $\so(3)$. Now instead of letting $G$ act on the evolution space $\R^7$, we  consider the action on the ``space of motions" $\R^3 \times \R^3$, which is obtained by fixing time, $t=t_0$. This space is symplectic with the canonical symplectic form and the action of $G$ on it is Hamiltonian.
%We denote the coordinates by $(x,p)\in \R^3 \times \R^3$. Physically it makes sense to define the following action of  $\Gal(4)$ on the phase space: For $A\in \SSO(3)$ a rotation we set $A\cdot (x,p):=(Ax,Ap)$, for $b$ the velocity we set $b\cdot (x,p):=(x,p+mb)$ (``Galilean boost"), where $m\in \R$ is the ``mass", and for the time shift $e$ we set  $e\cdot (x,p):=(x + e \frac{b}{m},p)$.

In the literature the following integrals of the action are considered \cite{souriau}: Consider the basis of $\mathfrak{g}$ given by the union of the standard basis on each of its components $\so(3)$, $\R^3$ (corresponding to spatial translation $\gamma$), $\R$ (corresponding to time translation $\epsilon$) and the Galilei boost Lie algebra $\R^3$ (corresponding to the shift in velocity $\beta$). The corresponding integrals are, respectively, the components of the angular momentum $J = x \times y$, velocity vector $y$ and position vector $x$ and the energy $E$. This system is non-commutative.

%$j(z)(y)=y\times y$

%First, let us consider the action of $\SSO(3)$ on $\R^3$ by rotations. The lift to the cotangent space induces a Hamiltonian action on $T^\ast \R^3$ (with respect to the standard symplectic structure) and the corresponding set of integrals is non-commutative.	

We want to investigate the action of certain subgroups of $G$ and construct $b$-versions of the integrable systems. We will consider the space of motions $\R^6$ with coordinates $(x,y)$ as described above and time $t=0$.

\paragraph{\bf Subgroup given by $A=\Id$}
First, consider the subgroup of matrices of the form \eqref{gal} where $A$ is the identity matrix $\Id\in \SSO(3)$. Then we have an action of $\R^6$ on itself; in coordinates $(x,y)$ as above the action consists of shifts in the $x$ and $y$ directions. This action is Hamiltonian with moment map  and given by the full set of coordinates $(x_1,x_2,x_3,y_1,y_2,y_3)$. Clearly, this defines a non-commutative integrable system (of rank zero).

\paragraph{\bf Subgroup $\SSO(3)\times \R^3$}
Now let $c,e$ be constant; for the sake of simplicity we assume they are equal to zero. Consider  the subgroup of $G$ where only $A \in \SSO(3)$ and $b\in \R^3$ vary. Then the action on $\R^6$ is given by
\begin{equation}\label{so3r3}
A\cdot(x,y)=(Ax,Ay+b).
\end{equation}

% but with the underlying twisted $b$-symplectic form
% $$\omega = \frac{1}{y_1} dx_1 \wedge dy_1 + dx_2 \wedge dy_2 + dx_3 \wedge dy_3.$$
First we want to see that the $\SSO(3)$-action is Hamiltonian. Consider the standard basis of the Lie algebra $\mathfrak{s}\mathfrak{o}(3)$ corresponding under $j$ to the unit vectors in $\R^3$:
$$e_1 =\left( \begin{array}{ccc}
0 & 0 & 0 \\
0 & 0 & -1 \\
0 & 1 & 0 \end{array} \right)  ,
e_2 = \left( \begin{array}{ccc}
0 & 0 & 1 \\
0 & 0 & 0 \\
-1 & 0 & 0 \end{array} \right) ,
e_3 =
\left( \begin{array}{ccc}
0 & -1 & 0 \\
1 & 0 & 0 \\
0 & 0 & 0 \end{array} \right) . $$
On $\R^3$ they describe rotations around the $x_1$, $x_2$- and $x_3$-axis respectively. The corresponding fundamental vector fields on $\R^6$ are
\begin{align*}
e_1^\# &=   x_3 \pp{}{x_2}-y_2 \pp{}{y_3}-x_2 \pp{}{x_3}+y_3 \pp{}{y_2},\\
e_2^\# &= x_1 \pp{}{x_3}-y_3 \pp{}{y_1}- x_3 \pp{}{x_1}+y_1 \pp{}{y_3} ,\\
e_3^\# &= x_2 \pp{}{x_1}-y_1 \pp{}{y_2}-x_1 \pp{}{x_2}+ y_2 \pp{}{y_1}.
\end{align*}
One checks that these vector fields are Hamiltonian with respect to the following functions:
$$f_1 = x_2y_3 - x_3 y_2,\quad f_2 = x_3 y_1  - x_1 y_3, \quad f_3 = x_1 y_2 - x_2 y_1. $$
Note that the $f_i$ are the components of angular momentum $J=x\times y$. Hence we have seen that the $\SSO(3)$-action is Hamiltonian.
The commutators are:
$$\{f_1, f_2\}=\omega(X_{f_1},X_{f_2}) = x_1 y_2 - x_2 y_1 = f_3,$$
and similarly $\{f_2, f_3\}=f_1$ and $\{f_3, f_1\}=f_2$.

Since the $f_i$ do not commute we need additional functions to define an integrable system on $\R^6$. This is where the $\R^3$ action, given by the parameter $b$ in Equation \eqref{so3r3} comes into play. It has fundamental vector fields $\frac{\partial}{\partial y_i}$ and the corresponding Hamiltonian functions are the coordinates $x_i$. Together with the integrals $f_i$ they form a non-commutative integrable system $(f_1,f_2,f_3,x_1,x_2,x_3)$ of rank zero.

\paragraph{\bf Subgroup $\S^1 \times \R^3 \times \R^3$}

Above we have studied the $\SSO(3)$ action on $\R^6$. Now we restrict to the $\S^1$-subgroup of $\SSO(3)$ given by rotations around the $x_1$- and $y_1$-axis. The associated integral is $f_1 = x_2 y_3 - x_3 y_2$.  To obtain a non-commutative integrable system of non-zero rank, we can e.g. add the functions $x_2, x_3, y_2$, which do not commute with $f_1$, and the function $y_1$, which commutes with all the other functions. Hence we have obtained a  non-commutative integrable system $(y_1,f_1,x_2,x_3,y_2)$ of rank one.

\paragraph{\bf Some $b$-versions of these constructions}

We view $\R^6$ as a $b$-symplectic manifold with critical hypersurface given by $Z=\{y_1=0\}$ and canonical $b$-symplectic structure
$$\frac{dy_1}{y_1}\wedge dx_1 + \sum_{i=2}^r dy_i\wedge dx_i.$$ We want to see if the actions  of the subgroups  above can be seen as Hamiltonian actions on the $b$-symplectic manifold $\R^6$ (i.e. their fundamental vector fields are Hamiltonian with respect to the $b$-symplectic structure). We treat the above cases one by one:
\begin{itemize}
\item The system $(x_1,x_2,x_3,y_1,y_2,y_3)$ translates into the non-commutative $b$-integrable system $(x_1,x_2,x_3,\log|y_1|,y_2,y_3)$, i.e. the Hamiltonian vector fields with respect to the $b$-symplectic structure are the same and the system fulfils the required independence and commutativity properties.
\item The $\SSO(3)\times \R^3$ action with moment map $(f_1,f_2,f_3,x_1,x_2,x_3)$ is {\it not} Hamiltonian with respect  to the $b$-symplectic structure. Indeed, away from $Z$, the fundamental vector field of the $\SSO(3)$-action above associated to the Lie algebra element $e_2$ has Hamiltonian function
$$x_3 \log |y_1| - x_1 y_3,$$
but this does not extend to a $b$-function on $\R^6$.
\item The system $(y_1,f_1,x_2,x_3,y_2)$ translates into the non-commutative $b$-integrable system $(\log|y_1|,f_1,x_2,x_3,y_2)$; the induced action is the same as in the smooth case. On the other hand, the smooth system where we replace $y_1$ by $x_1$, i.e. $(x_1,f_1,x_2,x_3,y_2)$, does not have such an analogue in the $b$-setting. Indeed, with respect to the $b$-symplectic form, the Hamiltonian vector field of the first function $x_1$ is $y_1$ and vanishes on $Z$, so the Hamiltonian vector fields of these functions are nowhere independent on $Z$.
\end{itemize}

%
%\begin{remark}
%
%Considering the action of the Galilei group $G \cong \SSO(3)\times \R^7$ and the corresponding splitting of the Lie algebra $\mathfrak{g}\cong \so(3) + \R^7$ we can extend the $b$-integrals $f_1, f_2, f_3$ to a non-commutative $b$-integrable system on $T^\ast \R^3 \cong \R^6$ by adding the momenta, the positions and the Hamiltonian.
%
%
%The computation above is formal, since we defined the twisted $b$-cotangent lift only for the special case where $S^1\times K$ acts on a manifold of the form $S^1 \times N$ and the action splits in an $S^1$ action on the first component and the $K$-action on the second component $N$. In general, as occurred in the example of the $\SSO(3)$-action, the formal contraction of the $\log$-Liouville form with the fundamental vector fields will not yield $b$-functions and hence not a $b$-integrable system, since the functions can have more general singularities (as opposed to genuine $b$-functions whose singular part is $c \log|t|$ with $c$ a constant). Still, it is interesting to consider this construction to obtain ``integrable systems" with more general singularities.
%\end{remark}

%%%%%%%%%%%%%%%%%%%%%%%%%%%%%%%%%%%%%%%%%%%%%%%%%%%%%%%%%%%%%%%%%%%%%%%%%%%%%%%%%%%%%%%%%%
%%%%%%%%%%%%%%%%%%%%%%%%%%%%%%%%%%%%%%%%%%%%%%%%%%%%%%%%%%%%%%%%%%%%%%%%%%%%%%%%%%%%%%%%%%
\section{Action-angle coordinates for non-commutative $b$-integrable systems}\label{ch:bint}
%%%%%%%%%%%%%%%%%%%%%%%%%%%%%%%%%%%%%%%%%%%%%%%%%%%%%%%%%%%%%%%%%%%%%%%%%%
%%%%%%%%%%%%%%%%%%%%%%%%%%%%%%%%%%%%%%%%%%%%%%%%%%%%%%%%%%%%%%%%%%%%%%%%%%

In Theorem 8 we recalled the action-angle coordinate theorem for non-commutative integrable systems on Poisson manifolds, which was proved in \cite{Laurent-Gengoux2010}. For $b$-symplectic manifolds and the commutative $b$-integrable systems defined there, we have proved an action-angle coordinate theorem \cite{KMS}, which is similar to the symplectic case in the sense that even on the hypersurface $Z$ where the Poisson structure drops rank there is a foliation by Liouville tori (with dimension equal to the rank of the system) and a semi-local neighborhood with ``action-angle coordinates'' around them. The main goal of this paper is to establish a similar result in the non-commutative case, proving the existence of $r$-dimensional invariant tori on $Z$ and action-angle coordinates around them.

%%%%%%%%%%%%%%%%%%%%%%%%%%%%%%%%%%%%%%%%
\subsection{Cas-basic functions}\label{sec:cas}
%%%%%%%%%%%%%%%%%%%%%%%%%%%%%%%%%%%%%%%%%%%

Consider a non-commutative $b$-integrable system $F$ on any Poisson manifold $(M,\Pi)$, where we denote the Poisson bracket by $\{\cdot,\cdot\}$. Let $V:=F(M)\cap \R^s$ be the ``finite" target space of the integrals $F$. If we want to emphasize the functions $F$ we are referring to, we will also write $V_F$.
The space  $V$ inherits a Poisson structure $\{\cdot,\cdot\}_{V}$ satisfying the following property:
$$\{g ,h \}_{V}\circ F =\{g \circ F,h \circ F \}, $$
where $g,h$ are functions on $V$.
Note that the values of the brackets $\{f_i,f_j\}$ on $M$ uniquely define the Poisson bracket $\{\cdot,\cdot\}_{V}$.

An $F$-basic function on $M$ is a function of the form $g\circ F$. The Poisson structure $\{\cdot,\cdot\}_{V}$ allows us to define the following important class of functions:

\begin{definition}[Cas-basic function]
 An $F$-basic function $g\circ F$ is called {\bf Cas-basic} if $g$ is a Casimir function with respect to $\{\cdot,\cdot\}_{V}$, i.e. the Hamiltonian vector field of $g$ on $V$ is zero.
\end{definition}

We recall the following characterisation of Cas-basic functions proved in \cite{Laurent-Gengoux2010} in the setting of integrable systems on Poisson manifolds. The proof in the $b$-case is the same.

\begin{proposition}
A function is Cas-basic if and only if it commutes with all $F$-basic functions.
\end{proposition}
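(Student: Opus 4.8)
The plan is to prove both implications directly from the defining relation of the Poisson structure on $V$, namely
$$\{g,h\}_V \circ F = \{g\circ F, h\circ F\},$$
exploiting the fact that a function $g$ on $V$ is Casimir precisely when $\{g,h\}_V = 0$ for \emph{all} functions $h$ on $V$, and that $F$-basic functions are exactly those of the form $h\circ F$. The statement to prove is that an $F$-basic function $g\circ F$ is Cas-basic (i.e. $g$ is Casimir on $V$) if and only if $g\circ F$ commutes with every $F$-basic function.

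First I would prove the forward direction. Suppose $g\circ F$ is Cas-basic, so $g$ is a Casimir on $V$, meaning $\{g,h\}_V = 0$ for every function $h$ on $V$. Let $\tilde h$ be an arbitrary $F$-basic function, so $\tilde h = h\circ F$ for some $h$ on $V$. Then applying the defining relation gives
$$\{g\circ F, h\circ F\} = \{g,h\}_V\circ F = 0\circ F = 0,$$
so $g\circ F$ commutes with every $F$-basic function, as desired.

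For the converse, suppose $g\circ F$ commutes with all $F$-basic functions. Then for every function $h$ on $V$ we have $\{g\circ F, h\circ F\} = 0$, and by the defining relation this means $\{g,h\}_V\circ F = 0$ on all of $M$. The key point is that this forces $\{g,h\}_V = 0$ as a function on $V$: since $V = F(M)\cap\R^s$ is by definition the image of $F$ (intersected with the finite locus), a function on $V$ that pulls back to zero under $F$ must itself vanish identically. Hence $\{g,h\}_V = 0$ for all $h$, i.e. $g$ is Casimir on $V$, and therefore $g\circ F$ is Cas-basic.

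The step I expect to be the main (and really the only) subtlety is the surjectivity argument in the converse: one must be sure that $\{g,h\}_V$ is genuinely determined by its values on the image of $F$, so that $\{g,h\}_V\circ F \equiv 0$ implies $\{g,h\}_V \equiv 0$. This is exactly the content of the remark in the excerpt that ``the values of the brackets $\{f_i,f_j\}$ on $M$ uniquely define the Poisson bracket $\{\cdot,\cdot\}_V$'': the bracket on $V$ is defined only through pullback, so no information lives off the image of $F$. Everything else is a formal manipulation of the defining identity, and since that identity holds verbatim for $b$-functions and the $b$-Poisson bracket, the proof is identical to the smooth Poisson case treated in \cite{Laurent-Gengoux2010}.
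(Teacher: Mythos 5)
The paper offers no proof of its own here: it states that the result is proved in \cite{Laurent-Gengoux2010} and that the argument carries over verbatim to the $b$-setting, so the relevant comparison is with the proof intended there. Your forward direction is correct, and your identification of the surjectivity point --- that $\{g,h\}_V\circ F\equiv 0$ forces $\{g,h\}_V\equiv 0$ because the bracket on $V=F(M)\cap\R^s$ is only defined through pullback by $F$ --- is also right. The gap is in the converse, where you have silently weakened the statement: you prove that an \emph{$F$-basic} function commuting with all $F$-basic functions is Cas-basic, whereas the proposition is about an arbitrary function (``Cas-basic'' presupposes ``$F$-basic'', so the converse in particular asserts that any function commuting with all $F$-basic functions is itself of the form $\tilde g\circ F$). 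That is the substantive step of the proof, and it is the one that uses the geometry of the system rather than formal manipulation of the defining identity: from $\{g,f_i\}=0$ for $i=1,\dots,s$ one gets $X_{f_i}(g)=0$ for $i=1,\dots,r$, so $g$ is invariant under the flows spanning the Liouville tori; in the semilocal model $\T^r\times B^s$ the connected components of the fibres of $F$ are exactly these tori, so $g$ descends to a function $\tilde g$ on $V$ with $g=\tilde g\circ F$, and only then does your surjectivity argument finish the job.

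If you add that step, you should also record the two points where the $b$-setting could conceivably interfere. First, the $f_i$ with $i\le r$ may be genuine $b$-functions, but their Hamiltonian vector fields are smooth, tangent to $Z$, and (by condition (4) in the definition of a non-commutative $b$-integrable system) still span the tangent spaces of the Liouville tori lying inside $Z$, so the invariance argument works across $Z$. Second, since $V=F(M)\cap\R^s$ only records the values of $F$ on $M\setminus Z$, the implication ``$h\circ F\equiv 0$ implies $h\equiv 0$ on $V$'' should be read on $M\setminus Z$, which is dense. With these additions your argument becomes the proof of \cite{Laurent-Gengoux2010} that the paper is invoking.
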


%%%%%%%%%%%%%%%%%%%%%%%%%%%%%%%%%%%%%%%%
\subsection{Normal forms for non-commutative $b$-integrable systems}\label{sec:normalforms}
%%%%%%%%%%%%%%%%%%%%%%%%%%%%%%%%%%%%%%%%

\begin{definition}[Equivalence of non-commutative $b$-integrable systems]
Two  non-commutative $b$-integrable systems $F$ and $F'$ are equivalent if there exists a {\it Poisson} map
$$\mu:V_F\to V_{F'}$$
   taking one to the other:  $F' = \mu \circ F$. Here, $\mu$ is a Poisson map with respect to the Poisson structures induced on $V_F$ and $V_{F'}$ as defined in the previous section.
\end{definition}

We will not distinguish between equivalent systems: if the action-angle coordinate theorem that we will prove holds for one system then it holds for all equivalent systems too.

We prove a first ``normal form" result for non-commutative $b$-integrable systems:
\begin{proposition}\label{prop:normalform}
 Let $(M,\omega)$ be a $b$-symplectic manifold of dimension $2n$ with critical hypersurface $Z$. Given a non-commutative $b$-integrable system $F=(f_1,\ldots,f_s)$ of rank $r$ there exists an equivalent non-commutative $b$-integrable system of the form $(\log|t|,f_2,\ldots, f_s)$ where $t$ is a defining function of $Z$ and the functions $f_2,\ldots,f_s$ are smooth.
\end{proposition}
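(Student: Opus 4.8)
The plan is to reduce the genuinely singular ($b$-)part of the system to a single logarithmic integral by a constant-coefficient linear change of the integrals, after absorbing its smooth remainder into a clever choice of defining function. First I would fix a defining function $t_0$ for $Z$ and use the structure of $b$-functions to write each commuting integral as $f_i = c_i\log|t_0| + g_i$ for $i=1,\dots,r$, with $c_i\in\R$ and $g_i\in C^\infty(M)$; the non-commuting integrals $f_{r+1},\dots,f_s$ are smooth, so we may set $c_{r+1}=\dots=c_s=0$. Since any two defining functions for $Z$ differ by a smooth nonvanishing factor, the singular part of every $b$-function is a multiple of $\log|t_0|$, so this normalization is meaningful and canonical up to smooth terms.

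The conceptual heart of the argument is to show that at least one coefficient $c_i$ with $1\le i\le r$ is nonzero; without this the conclusion would be false, since a smooth function of smooth integrals can never reproduce $\log|t|$. I would argue this by a dimension count at a point $p\in Z$ at which both conditions (1) and (4) of Definition \ref{def:bintsys} hold (such a point exists, as (1) holds on a dense subset of $Z$ and (4) on an open subset). Because $f_1,\dots,f_r$ Poisson-commute with all of $f_1,\dots,f_s$, the flows of $X_{f_1},\dots,X_{f_r}$ preserve $F$, so each leaf of $\F$ lies inside a fibre of $F$; by (4) the leaf through $p$ is $r$-dimensional. Suppose, for contradiction, that all $c_i=0$, so that $F$ is entirely smooth. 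At $p\in Z$ the $b$-differentials $df_i|_p$ then carry no $\tfrac{dt}{t}$-component and $dt$ vanishes as a $b$-covector on $Z$, so the $df_i|_p$ are ordinary independent covectors on the $(2n-1)$-dimensional manifold $Z$; hence $\rank\bigl(d(F|_Z)\bigr)_p = s = 2n-r$ and the fibre of $F|_Z$ through $p$ has dimension $(2n-1)-(2n-r)=r-1$. An $r$-dimensional leaf cannot fit inside an $(r-1)$-dimensional fibre, a contradiction. Thus some $c_i\neq 0$. Intuitively this reflects the fact that smooth Hamiltonian fields are tangent to the symplectic leaves of $Z$, whereas only a genuinely logarithmic integral (whose Hamiltonian field is the transverse modular-type direction $\partial_s$ in $b$-Darboux coordinates) can move along $Z$ transversally to the leaves.

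With this in hand I would reorder the commuting integrals so that $c_1\neq 0$ and perform two moves. To make the target logarithm an honest function of $F$ — rather than requiring subtraction of the non-$F$-basic smooth function $g_1$ — I absorb $g_1$ into the defining function: set $t:=t_0\exp(g_1/c_1)$, which is again a defining function for $Z$ (it has the same zero set and $dt|_Z = \exp(g_1/c_1)\,dt_0|_Z\neq 0$), and for which $\log|t| = \tfrac{1}{c_1}f_1$ exactly. Then I define the linear isomorphism $\mu\colon\R^s\to\R^s$ by
\begin{equation*}
\mu(u_1,\dots,u_s)=\Bigl(\tfrac{1}{c_1}u_1,\; u_2-\tfrac{c_2}{c_1}u_1,\;\dots,\; u_r-\tfrac{c_r}{c_1}u_1,\; u_{r+1},\dots,u_s\Bigr),
\end{equation*}
so that $F':=\mu\circ F = (\log|t|,\,f'_2,\dots,f'_r,\,f_{r+1},\dots,f_s)$ with $f'_i = f_i-\tfrac{c_i}{c_1}f_1 = g_i-\tfrac{c_i}{c_1}g_1$ smooth for $2\le i\le r$ and $f_{r+1},\dots,f_s$ already smooth.

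It remains to verify that $F'$ is an equivalent non-commutative $b$-integrable system of the desired form. Since $\mu$ is a linear isomorphism of $\R^s$ mapping $V_F$ onto $V_{F'}=\mu(V_F)$, it is a Poisson diffeomorphism for the induced structures and satisfies $F'=\mu\circ F$, so $F$ and $F'$ are equivalent. Conditions (1)--(4) are preserved because $\mu$ has constant invertible coefficients: $dF' = A\,dF$ and $X_{f'_i}=\sum_k A_{ik}X_{f_k}$ for a fixed invertible matrix $A$, whence independence of the $b$-differentials (condition (1)), independence of the Hamiltonian fields $X_{f'_1},\dots,X_{f'_r}$ at the same point of $Z$ (condition (4)), and the involution relations among $f'_1,\dots,f'_r$ and $f'_1,\dots,f'_s$ (condition (2)) all follow directly from those of $F$, while condition (3) is unchanged. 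This completes the reduction to the form $(\log|t|,f_2,\dots,f_s)$. The main obstacle is the nonvanishing-coefficient step, which genuinely requires the $b$-structure and the dimension count above; the remaining linear-algebra and defining-function manipulations are routine once that is established.
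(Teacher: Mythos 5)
Your proposal is correct and follows essentially the same route as the paper: the key step in both is the contradiction argument showing that some commuting integral must have a nonvanishing logarithmic part (via the tangency of the $r$ independent Hamiltonian vector fields $X_{f_1},\dots,X_{f_r}$ to $Z$ and to the $(r-1)$-dimensional common level sets, which is exactly the paper's computation with the submersion $(f_1,\dots,f_s,t)$), followed by rescaling $f_1$, absorbing its smooth part into the defining function $t:=t_0 e^{g_1/c_1}$, and subtracting multiples of $f_1$ from $f_2,\dots,f_r$. Your write-up is merely more explicit than the paper's about why the linear map $\mu$ preserves conditions (1)--(4) and the induced Poisson structure on the target.
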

\begin{proof}
 First, assume that one of the functions $f_1,\ldots,f_r$ is a genuine $b$-function, without loss of generality $f_1 = g+c \log|t'|$ where $c\neq 0$ and $t'$ a defining function of $Z$. Dividing $f_1$ by the constant $c$ and replacing the defining function $t'$ by $t:=e^g t'$, we can restrict to the case $f_1=\log|t|$. We subtract an appropriate multiple of $f_1$ from the other functions $f_2,\ldots,f_r$ so that they become smooth. Note that this does not affect their independence nor the commutativity condition for $f_1,\ldots,f_{r}$, since $f_1$ commutes with all the integrals. Also, since these operations do not affect the non-commutative part of the system, the induced Poisson bracket on the target space (cf. Section \ref{sec:cas}) remains unchanged. Hence  we have obtained an equivalent $b$-integrable system of the desired form.

If all the functions $f_1,\ldots,f_s$ are smooth then from the independence of  $df_i$ ($i=1,\dots,s$) as $b$-one-forms on the set of regular points $\U_F \cap M_{F,r}$ it follows that
\begin{equation}\label{subm}
 df_1 \wedge \ldots \wedge df_s \wedge dt \neq 0 \in \Omega_p^s \quad \text{for } p \in \U_F \cap M_{F,r},
\end{equation}
where $t$ is a defining function of $Z$. Therefore the functions $f_1,\ldots,f_s, t$ define a submersion on $\U_F \cap M_{F,r}$ whose level sets are $(r-1)$-dimensional. On the other hand, the Hamiltonian vector fields $X_{f_1},\ldots,X_{f_r}$ are linearly independent (on $\U_F \cap M_{F,r}$) and tangent to the leaves of this submersion, because $f_1,\ldots,f_r$ commute with all $f_j, j=1,\ldots, s$ and also with $t$, since any Hamiltonian vector field is tangent to $Z$.
%Therefore, at points in $\U_F$, the vector fields $X_{f_1},\ldots,X_{f_r}$ span an $r$-dimensional subspace of the tangent space to the level sets of  $f_1,\ldots,f_s, t$.
Contradiction.
\end{proof}

\begin{remark}
 Recall that the Liouville tori of a non-commutative $b$-integrable system $F$ are, by definition, the leaves of the foliation induced by $X_{f_i},i=1,\ldots,r$ on $\U_F \cap M_{F,r}$. A Liouville torus that intersects $Z$ lies inside $Z$, since the Hamiltonian vector fields are Poisson vector fields and therefore tangent to $Z$. Moreover, since at least one of the first $r$ integrals $f_1,\ldots,f_r$ has non-vanishing ``$\log$" part, the Liouville tori inside $Z$ are {\it transverse} to the symplectic leaves.
\end{remark}

We now prove a normal form result which holds semilocally around a Liouville torus. It describes the topology of the system: we will see that semilocally the foliation of Liouville tori is a product $\T^r \times B^s$, but the result does not yet give information about the Poisson structure.

\begin{proposition}\label{prop:standarddonut}
Let $m\in Z$ be a regular point of a non-commutative $b$-integrable system $(M,\omega, F)$. Assume that the integral manifold $\SF_m$ through $m$ is compact (i.e. a torus $\T^r$). Then there exist a neighborhood $U\subset \U_F \cap M_{F,r}$ of $\SF_m$ and a diffeomorphism
$$\phi:U\simeq \T^r \times B^{s},$$
which takes the foliation $\SF$ induced by the system to the trivial foliation $\{\T^n\times \{b \}\}_{b\in B^n}$.
\end{proposition}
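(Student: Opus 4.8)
The plan is to reduce the statement to the classical fact that a foliation defined by $r$ commuting, pointwise independent, complete vector fields near a compact leaf is a product $\T^r \times B^s$; the only genuinely $b$-symplectic point is to check that the relevant vector fields are smooth and remain independent up to and including $Z$, which is exactly what condition (4) of Definition \ref{def:bintsys} guarantees.

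First I would record the structure of the foliation $\SF$ near $m$. Although some of the integrals $f_1,\ldots,f_r$ may be genuine $b$-functions (after Proposition \ref{prop:normalform} we may take $f_1=\log|t|$), their Hamiltonian vector fields $X_{f_1},\ldots,X_{f_r}$ are smooth vector fields on $M$, tangent to $Z$. By the involutivity condition they satisfy $[X_{f_i},X_{f_j}]=X_{\{f_i,f_j\}}=0$, so they commute pairwise; by condition (4) together with the regularity assumption $m\in\U_F\cap M_{F,r}$ they are pointwise linearly independent on $U':=\U_F\cap M_{F,r}$, where they span the tangent distribution to $\SF$. The joint flow of $X_{f_1},\ldots,X_{f_r}$ therefore defines a local $\R^r$-action whose orbits are the leaves of $\SF$. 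Since $\SF_m$ is compact, these vector fields are complete on a saturated neighborhood of $\SF_m$, so the local action integrates to a genuine $\R^r$-action $\tau\colon\R^r\times V\to V$ on some neighborhood $V$ of $\SF_m$, preserving $\SF$.

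Next I would run the standard period-lattice argument. For $p\in V$ the isotropy group $\Lambda_p=\{\lambda\in\R^r:\tau_\lambda(p)=p\}$ is a discrete subgroup of $\R^r$; because $\SF_m\iso\T^r$ is a compact $r$-dimensional orbit, $\Lambda_m$ is a lattice of full rank $r$, and the same holds for all nearby orbits. The key step is to show that these lattices fit together into a smooth $\Z^r$-bundle over a transversal, so that over a sufficiently small $s$-dimensional disk $B$ through $m$ (transverse to $\SF_m$) one can choose a smoothly varying basis $\lambda_1(b),\ldots,\lambda_r(b)$ of $\Lambda_b$. Reparametrizing the action by this basis, $\tilde\tau_{(u_1,\ldots,u_r)}(b):=\tau_{\sum_i u_i\lambda_i(b)}(b)$, turns $\tau$ into an action of $\T^r=\R^r/\Z^r$. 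The map $\Phi\colon\T^r\times B\to M$, $\Phi([u],b)=\tilde\tau_{[u]}(b)$, is then a diffeomorphism onto an open neighborhood $U\subset\U_F\cap M_{F,r}$ of $\SF_m$ (injectivity and the immersion property follow from the transversality of $B$ and the freeness of the reparametrized action), and by construction it carries the trivial foliation $\{\T^r\times\{b\}\}_{b\in B}$ to $\SF$. Shrinking $B$ to an open ball $B^s$ gives the asserted diffeomorphism.

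The main obstacle is the period-lattice step: upgrading the $\R^r$-action with merely discrete isotropy to a smoothly parametrized $\T^r$-action, i.e.\ establishing that the lattices $\Lambda_b$ depend smoothly on $b$ and admit a smooth basis near $m$. This is the heart of the (non-commutative) Liouville--Arnold argument, carried out in \cite{Laurent-Gengoux2010} for general Poisson manifolds; since it uses only the smooth, independent, commuting vector fields $X_{f_1},\ldots,X_{f_r}$ and never the Poisson structure directly, it transfers verbatim to our setting. The only genuinely $b$-symplectic input is the verification that the construction does not degenerate on the critical hypersurface: the $X_{f_i}$ are tangent to $Z$, and condition (4) of Definition \ref{def:bintsys} ensures they stay independent as \emph{smooth} vector fields even at points of $Z$ (cf.\ the Remark following that definition), so the foliation really is a product of a torus with a ball across $Z$ rather than dropping dimension there.
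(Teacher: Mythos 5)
Your proof is correct in outline, but it takes a genuinely different route from the paper's. The paper first invokes Proposition \ref{prop:normalform} to put the system in the form $(\log|t|,f_2,\ldots,f_s)$ and then replaces the $b$-function $\log|t|$ by the smooth function $t$, so that $\tilde F=(t,f_2,\ldots,f_s)$ becomes an honest smooth submersion whose $r$-dimensional compact level sets are exactly the Liouville tori; the trivialization $\phi=\psi\times\tilde F$ is then obtained from the projection $\psi:U\to\SF_m$ supplied by an auxiliary Riemannian metric, exactly as in Proposition 3.2 of \cite{Laurent-Gengoux2010}. You instead build the product structure dynamically: you integrate the commuting smooth vector fields $X_{f_1},\ldots,X_{f_r}$ to an $\R^r$-action and run the period-lattice (uniformization of periods) argument to produce a $\T^r$-action whose orbit map over a transversal gives the diffeomorphism. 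Both arguments are sound, and you correctly isolate the only $b$-specific issue, namely that condition (4) of Definition \ref{def:bintsys} keeps the $X_{f_i}$ independent as smooth vector fields along $Z$. The trade-off is that your route front-loads the heavier machinery (completeness of the flows, smooth dependence of the lattice $\Lambda_b$, upgrading to a free $\T^r$-action), which the paper deliberately postpones to the proof of Theorem \ref{thm:action-anglenc}, where that uniformization step is carried out anyway; in exchange you get a slightly stronger conclusion (a trivializing torus action, not merely a diffeomorphism of foliated manifolds). Two small points to tighten: you assert that nearby orbits also have full-rank period lattices before establishing the smooth family of periods, whereas logically the compactness of nearby orbits is a consequence of the implicit-function-theorem step you defer to \cite{Laurent-Gengoux2010}; and for the statement as given you should note, as the paper does, that the resulting identification is compatible with the integrals, i.e.\ that $F$ factors through the projection $\T^r\times B^s\to B^s$ without changing the induced Poisson structure on the target.
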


\begin{proof}
As described in the previous proposition, we can assume that our system has the form $(\log|t|,f_2,\ldots,f_s)$ where $f_2,\ldots,f_s$ are smooth. Consider the submersion
$$\tilde F:= (t, f_2,\ldots,f_s): \SU_F \to \R^s$$
which has $r$-dimensional level sets. The Hamiltonian vector fields $X_{f_1},\ldots,X_{f_r}$ are tangent to the level sets. By comparing dimensions we see that the level sets of $\tilde F$ are precisely the Liouville tori spanned by $X_{f_1},\ldots,X_{f_r}$.

Now, as described in~\cite{Laurent-Gengoux2010}(Prop. 3.2) for classical non-commutative integrable systems, choosing an arbitrary Riemannian metric on $M$ defines a canonical projection $\psi: U \to \SF_m$. Setting  $\phi:= \psi \times \tilde F$ we have a commuting diagram
\begin{align}\label{topresult}
  \begin{diagram}
    \node{U}\arrow{e,t,--}{\phi}\arrow{se,b}{\tilde F}\node{\T^r \times B^s}\arrow{s,r}{\pi}\\
    \node[2]{B^{s}}
  \end{diagram}
\end{align}
where
$$\pi=(\pi_1,\ldots ,\pi_s):\T^r \times B^s\to B^s$$
is the canonical projection.

The  change does not affect the Poisson structure on the target space. The commuting diagram \eqref{topresult} implies that
 $$F = \underbrace{(\log|\pi_1|,\pi_2,\ldots, \pi_s)}_{=:\pi'}\circ\phi$$
 so the Poisson structure on the target space $V=F(U)=\pi'(\T^r \times B^s)$ induced by $F$ and $\pi'$ is the same.
 \end{proof}

The upshot is that for the semi-local study of non-commutative $b$-integrable systems around a Liouville torus we can restrict our attention to systems on $(\T^r \times B^s,\omega)$ where $\omega$ is the $b$-symplectic structure induced by the diffeomorphism $\phi$ in the proof above and where the integrals $F=(f_1,\ldots,f_s)$ are given by
$$f_1 = \log|\pi_1|, f_2 = \pi_2,\ldots, f_s = \pi_s,$$
where $\pi_1,\ldots,\pi_s$ are the projections on to the components of $B^s$ and where we assume that the $b$-symplectic structure has exceptional hypersurface $\{ \pi_1  = 0\}$. Also, we can assume that the system is regular on the whole manifold $M=\T^r \times B^s$. We refer to this system as the {\it standard non-commutative $b$-integrable system} on $\T^r \times B^s$.

\begin{remark} The previous result gives a semilocal description of the manifold and the integrals. However, no information is given about the symplectic structure. In contrast, the action-angle coordinate theorem will specify the integrable system with respect to the canonical $b$-symplectic form ($b$-Darboux form) on $\T^r \times B^s$.
\end{remark}

%%%%%%%%%%%%%%%%%%%%%%%%%%%%%%%%%%%%%%%%
\subsection{Darboux-Carath\'{e}odory theorem}\label{sec:darbouxcaratheodory}
%%%%%%%%%%%%%%%%%%%%%%%%%%%%%%%%%%%%%%%%

The following is a key ingredient for the proof of the action-angle coordinate theorem. It tells us that we can locally extend a set of independent commuting functions to a $b$-Darboux chart.

\begin{lemma}[{\bf Darboux-Carath\'{e}odory theorem for $b$-integrable systems}]\label{darbouxcarath}
  Let $m$ be a point lying inside the exceptional hypersurface $Z$ of a $b$-symplectic manifold $(M^{2n},\omega)$. Let $t$ be a local defining function of $Z$ around $m$. Let $f_1,\dots,f_{k}$ be a set of commuting $C^\infty$ functions with differentials that are linearly independent at $m$ as elements of $^b T^\ast_m(M)$.
 Then there exist, on a neighborhood $U$ of $m$, functions
  $g_1,\dots,g_k,t, p_2,\ldots, p_{n-k}, q_1,\ldots, q_{n-k}$, such that
  \begin{enumerate}[(a)]
    \item The $2n$ functions $(f_1,g_1, \dots,f_k, g_k, t, q_1, p_1,q_2,\dots,p_{n-k},q_{n-k})$ form a system of coordinates on $U$
    centered at $m$.
    \item The $b$-symplectic form $\omega$ is given on $U$ by
    \begin{equation*}%\label{eq:thm_split}
      \omega=\sum_{i=1}^k df_i \we dg_i + \frac{1}{t} dt \we dq_1 + \sum_{i=2}^{n-k} d p_i \we d q_i.
    \end{equation*}
  \end{enumerate}
\end{lemma}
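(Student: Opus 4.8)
The plan is to reduce this $b$-version to two results already at our disposal: the classical Darboux--Carath\'eodory theorem for \emph{Poisson} manifolds from \cite{Laurent-Gengoux2010}, applied to the genuine Poisson bivector $\Pi$ underlying $\omega$, to peel off the $k$ smooth conjugate pairs, and the $b$-Darboux theorem (Theorem~\ref{thm:bdarboux}), applied to the leftover transverse structure to produce the one singular pair. Before starting, let me record the independence I will actually use. Since $f_1,\dots,f_k$ are smooth and commute, the vector fields $X_{f_1},\dots,X_{f_k}$ are smooth, tangent to $Z$, and commuting; the argument needs them to be independent \emph{as smooth vector fields} at $m$. In the $b$-setting this is strictly stronger than $b$-independence of the $df_i$ (the $b$-Darboux coordinate conjugate to $\log|t|$ has nonzero $b$-differential but a Hamiltonian vector field that vanishes on $Z$, cf.\ the Remark above), and it is precisely the information furnished by a condition of type~(4) in the situations where this lemma is invoked; I take it as given.

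\emph{Step 1 (splitting off the smooth pairs).} The Poisson manifold $(M,\Pi)$ has rank $2n-2$ at $m\in Z$. Applying the Poisson Darboux--Carath\'eodory theorem of \cite{Laurent-Gengoux2010} to the involutive, independent family $f_1,\dots,f_k$ produces conjugate functions $g_1,\dots,g_k$ and transverse coordinates $y_1,\dots,y_{2(n-k)}$, centred at $m$, with $\{f_i,g_j\}=\delta_{ij}$, $\{f_i,f_j\}=\{g_i,g_j\}=0$, the $f_i,g_i$ independent of the $y$'s, and
\begin{equation*}
  \Pi=\sum_{i=1}^k \frac{\partial}{\partial f_i}\wedge\frac{\partial}{\partial g_i}+\Pi',
\end{equation*}
where $\Pi'$ depends only on the transverse slice $S=\{f_i=g_i=0\}$ of dimension $2(n-k)$. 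Dually, $\omega=\sum_{i=1}^k df_i\wedge dg_i+\omega'$ with $\omega'$ supported on $S$.

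\emph{Step 2 (the transverse structure is $b$-Poisson) and Step 3 ($b$-Darboux).} Writing $\Pi=A+\Pi'$ with $A=\sum\partial/\partial f_i\wedge\partial/\partial g_i$ of constant rank $2k$ in disjoint coordinates, only the top cross term of $\Pi^n=(A+\Pi')^n$ survives, so $\Pi^n$ equals a nonvanishing multiple of $A^k\wedge(\Pi')^{\,n-k}$; hence $\Pi^n$ vanishes transversally along $Z$ exactly where $(\Pi')^{\,n-k}$ does, and $\Pi'$ is a $b$-Poisson structure on $S$ with critical set $Z\cap S$. Moreover $t|_S$ is a defining function of $Z\cap S$: on $Z$ the fields $X_{f_i},X_{g_i}$ are tangent to $Z$, so $dt$ annihilates their span there, and since that span is complementary to $T_mS$ while $dt|_m\neq0$, one gets $d(t|_S)|_m\neq0$. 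Applying Theorem~\ref{thm:bdarboux} to the $b$-symplectic form $\omega'$ dual to $\Pi'$, with defining function $t$, yields transverse coordinates $q_1,p_2,q_2,\dots,p_{n-k},q_{n-k}$ in which
\begin{equation*}
  \omega=\sum_{i=1}^k df_i\wedge dg_i+\frac{1}{t}\,dt\wedge dq_1+\sum_{i=2}^{n-k} dp_i\wedge dq_i,
\end{equation*}
which is the asserted normal form with $(f_1,g_1,\dots,f_k,g_k,t,q_1,p_2,q_2,\dots,p_{n-k},q_{n-k})$ a coordinate system centred at $m$. (If the $b$-Darboux theorem is applied with a defining function differing from $t$ by a smooth unit $u$, the singular pair changes by $d(\log u)\wedge dq_1$, a smooth two-form absorbed into the remaining symplectic pairs by a shear, so the given $t$ may be used.)

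The main obstacle is the interface in Step~2 between the smooth and the singular parts. One must verify both that after removing the $k$ smooth symplectic blocks the residual transverse structure is still \emph{$b$-}Poisson (equivalently $b$-symplectic, so that Theorem~\ref{thm:bdarboux} applies at all), and that the \emph{given} defining function $t$ --- whose Hamiltonian vector field vanishes along $Z$ --- still restricts to a legitimate defining function on the slice, so that the $b$-Darboux theorem reproduces the genuinely singular term $\tfrac1t\,dt\wedge dq_1$ rather than a smooth pair. The smooth-independence of the $X_{f_i}$ flagged at the outset is exactly what prevents any of the $k$ smooth pairs from absorbing the singular direction, and thus what makes this clean separation possible; an equivalent route would be to rerun the Darboux--Carath\'eodory flow construction directly with the commuting family $X_{f_1},\dots,X_{f_k},X_{\log|t|}$, all of which have smooth flows across $Z$.
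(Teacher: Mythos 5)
Your proof follows essentially the same route as the paper's: first the Poisson Darboux--Carath\'eodory theorem of \cite{Laurent-Gengoux2010} to split off the $k$ smooth conjugate pairs, then the observation that the remaining transverse bivector is again $b$-Poisson, then Theorem~\ref{thm:bdarboux} applied with the prescribed defining function $t$. The extra verifications you supply (that the transverse structure inherits the transversality of $\Pi^n$ and that $t$ restricts to a defining function on the slice) and your remark that one must additionally know the $X_{f_i}$ are independent as smooth vector fields at $m$ --- which is the hypothesis actually needed to invoke the Poisson Darboux--Carath\'eodory theorem and is not implied by $b$-independence of the $df_i$ alone --- are details the paper leaves implicit, not a different argument.
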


\begin{proof}
 Let us denote the $b$-Poisson structure dual to $\omega$ by $\Pi$. From the Darboux-Carath\'{e}odory Theorem for non-commutative integrable systems on Poisson manifolds it follows that on a neighborhood $U$ of $m$ we can complete the functions $f_1,\ldots, f_{k}$ to a coordinate system
$$(f_1,g_1, \dots,f_{k}, g_{k}, z_1,\ldots, z_{2n-2r+2})$$
 centred at $m$ such that the $b$-Poisson structure reads
     \begin{equation*}%\label{eq:thm_split}
      \Pi=\sum_{i=1}^{k} \frbd{f_i} \we \frbd{g_i} + \sum_{i,j=1}^{2n-2k} \phi_{ij}(z)\pp{}{z_i}\we\pp{}{z_j}
    \end{equation*}
for some functions $\phi_{ij}$. The image of the coordinate functions is an open subset of  $\R^{2n}$; we can assume that it is a product $U_1 \times U_2$ where $U_2$ corresponds to the image of $z_1,\ldots, z_{2n-2k}$. Then
$$\Pi_2=\sum_{i,j=1}^{2n-2r+2} \phi_{ij}(z)\pp{}{z_i}\we\pp{}{z_j}$$
is a $b$-Poisson structure on $U_2$ and hence by the $b$-Darboux theorem (Theorem \ref{thm:bdarboux}), there exist coordinates on $U_2$
$$(t,q_1,p_2,q_2,\dots,p_{n-k},q_{n-k}),$$
where $t$ is the local defining function for $Z$  that we fixed in the beginning, such that
$$\Pi_2=t \pp{}{t}\we\pp{}{q_1} + \sum_{i=2}^{n-r} \pp{}{p_i}\we\pp{}{q_i}.$$
The result follows immediately.
\end{proof}

\begin{remark} A different proof can be given using the tools of \cite{KMS}. \end{remark}

%%%%%%%%%%%%%%%%%%%%%%
%%%%%%%%%%%%%%%%%%%%%%%%%%%%%%%%%%%%%%%%%%%
\subsection{Action-angle coordinates}\label{subsec:aacoo}
%%%%%%%%%%%%%%%%%%%%%%%%%%%%%%%%%%%%%%%%%%%
%%%%%%%%%%%%%%%%%%%%%%

Let $(M^{2n},\omega,F)$ be a non-commutative $b$-integrable system of rank $r$. Let  $p\in M_{F,r}\cap\U_\SF$ be a regular point of the system lying inside the critical hypersurface and let $\SF_p$ be the Liouville torus passing through $p$. For a semilocal description of the system around $\SF_p$, by Proposition \ref{prop:standarddonut} we can assume that we are dealing with the ``standard model" of a non-commutative $b$-integrable system, i.e. the manifold is the cylinder $\T^r \times B^s$ with some $b$-symplectic form $\omega$ whose critical hypersurface is $Z=\{\pi_1=0\}=\T^r \times \{0\} \times B^{s-1}$ and the integrals are $f_1=\log|\pi_1|, f_i=\pi_i$, $i=2,\ldots,r$. Let $c$ be the modular period of $Z$.

\begin{theorem}\label{thm:action-anglenc}
Then on a neighborhood $W$ of $\SF_m$ there exist $\R\backslash \Z$-valued smooth functions
  $${\theta_1},\dots,{\theta_r}$$
and  ${\R}$-valued smooth functions
$$t, a_2,\dots, a_r,p_1, \dots, p_ \ell, q_1, \ldots, q_\ell $$
 where $\ell=n-r=\frac{s-r}{2}$ and $t$ is a defining function of $Z$,  such that
  \begin{enumerate}
    \item The functions $(\theta_1,\dots,\theta_r,t,a_2,\dots,a_{r},p_1,\dots,p_{n-r},q_1 \dots, q_{n-r})$ define a diffeomorphism
            $W\simeq\T^r\times B^{s}$.
    \item The $b$-symplectic structure can be written in terms of these coordinates as
    \begin{equation*}
       \omega = \frac{c}{t} d \theta_1 \we dt + \sum_{i=2}^{r} d{\theta_i}\we d{a_i} + \sum_{k=1}^{\ell}d p_k\we d q_k .
    \end{equation*}
    \item The leaves of the surjective submersion $F=(f_1,\dots,f_{s})$ are given by the projection onto the
      second component $\T^r \times B^{s}$, in particular, the functions $f_1,\dots,f_s$ depend on
      $t,a_2,\ldots, a_r, p_1,\dots,p_\ell,q_1 \dots, q_\ell$ only.
  \end{enumerate}
  The functions
$$\theta_1,\dots,\theta_{r}$$ are called \emph{angle coordinates}, the functions
$$t,a_2,\dots,a_r$$
  are called \emph{action coordinates} and the remaining coordinates
$$p_1,\dots,p_{n-r}, q_1, \ldots, q_{n-r}$$
 are called
  \emph{transverse coordinates}.
\end{theorem}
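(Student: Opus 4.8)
The plan is to mimic the proof of the Poisson action--angle theorem of \cite{Laurent-Gengoux2010}, adapting each step to the $b$-symplectic setting by replacing the de Rham complex with the $b$-de Rham complex and isolating the singular direction transverse to the symplectic leaves using the modular period $c$. Throughout we work on the standard model $\T^r \times B^s$ provided by Proposition \ref{prop:standarddonut}, with $f_1 = \log|\pi_1|$, $f_i = \pi_i$ for $i=2,\ldots,r$, and $Z = \{\pi_1 = 0\}$. First I would build a torus action by uniformising the periods: the commuting Hamiltonian vector fields $X_{f_1},\ldots,X_{f_r}$ are complete near the compact leaf $\SF_m$ and their joint flow defines a locally free $\R^r$-action whose orbits are the Liouville tori. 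The period lattices $\Lambda_b \subset \R^r$ vary smoothly over $B^s$ and admit a smooth frame on a smaller neighborhood; choosing such a frame yields commuting complete vector fields $Y_1,\ldots,Y_r$, each an $F$-basic combination $Y_i = \sum_j a_{ij}(F)\,X_{f_j}$, generating a free $\T^r = \R^r/\Z^r$-action. I would normalise the frame so that $Y_1$ is the generator of the circle direction tangent to $Z$ arising from $X_{f_1}=X_{\log|\pi_1|}$; the period of its flow along $Z$ is exactly the modular period $c$, and rescaling to period one introduces the factor $c$ seen in statement (2).

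Next I would extract the action coordinates and the singular direction. Since the $Y_i$ are Poisson, the $b$-one-forms $\iota_{Y_i}\omega$ are closed; one checks they are $F$-basic and then integrates them. For $i\ge 2$ the relevant $b$-cohomology class is smooth, so $\iota_{Y_i}\omega = -\,da_i$ for smooth basic functions $a_2,\ldots,a_r$, the action coordinates of the toric part. For $i=1$ the class carries a nontrivial residue along $Z$, which by Mazzeo--Melrose (Theorem \ref{thm:mazzeomelrose}) is the cohomological incarnation of the modular period, giving $\iota_{Y_1}\omega = c\,\tfrac{dt}{t} = c\,d\log|t|$ for a suitable defining function $t$ of $Z$; this is where the singular term $\tfrac{c}{t}\,d\theta_1 \we dt$ originates. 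Feeding the $r-1$ smooth commuting functions $a_2,\ldots,a_r$ (of independent $b$-differential at $m$) into the $b$-Darboux--Carath\'eodory theorem (Lemma \ref{darbouxcarath}) produces, near $m$, the transverse symplectic block $\sum_{k=1}^{\ell} dp_k \we dq_k$ together with the singular pairing $\tfrac{1}{t}dt \we dq_1$; the count matches since $n-(r-1)=\ell+1$. I would then fix a transverse section through $m$ and define $\theta_1,\ldots,\theta_r$ as the $\R/\Z$-valued time parameters of the flows of $Y_1,\ldots,Y_r$ from that section, propagating the transverse data along the $\T^r$-action so that $t,a_i,p_k,q_k$ extend to $\T^r$-invariant functions on a full neighborhood $W$. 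This gives the diffeomorphism of (1) and, since everything is basic, the submersion statement (3).

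It then remains to normalise $\omega$. Writing $\omega$ in the coordinates $(\theta_i,t,a_i,p_k,q_k)$, the terms $\tfrac{c}{t}d\theta_1 \we dt$, $\sum_{i\ge 2} d\theta_i \we da_i$ and $\sum_k dp_k \we dq_k$ are forced by the identities $\iota_{Y_i}\omega = -d(\text{action}_i)$ from the previous step together with the Darboux block. The potentially remaining terms are closed $b$-two-forms pulled back from the transverse and base directions. Exactly as in the commutative case \cite{KMS}, the key device is that these correction forms lie in the subcomplex of Cas-basic forms inside the $b$-de Rham complex; one shows their classes can be trivialised, so that I may absorb them by modifying the $\theta_i$ and the $q_k$ by Cas-basic functions without destroying the already-normalised pairings. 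Carrying this out produces precisely statement (2).

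The main obstacle I expect is the interplay between the last step and the residue identification: one must show that the \emph{entire} singular ($b$) part of $\omega$ is concentrated in the single term $\tfrac{c}{t}d\theta_1 \we dt$ with $c$ equal to the intrinsic modular period, while the transverse block remains a genuinely smooth symplectic form. This requires careful bookkeeping of residues via Mazzeo--Melrose and a precise use of the Cas-basic subcomplex to ensure that the cohomological corrections can be performed without reintroducing singular terms or disturbing the normalised action--angle block; keeping the non-commuting transverse Poisson directions decoupled from the toric directions throughout is the delicate point absent in the commutative proof.
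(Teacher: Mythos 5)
Your overall architecture coincides with the paper's: uniformization of periods with the frame normalized so that $\lambda_1^1$ carries the modular period $c$, the resulting $\T^r$-action with Poisson generators $Y_i$, primitives $a_i$ of $\iota_{Y_i}\omega$ with $a_1=c\log|t|$ singular and $a_2,\dots,a_r$ smooth, the $b$-Darboux--Carath\'eodory chart, and propagation along the torus action. Two points, however, are asserted where the paper has to work, and one of them is a genuine gap. First, the minor one: that the $Y_i$ are Poisson does not follow merely from their being combinations of Hamiltonian vector fields, since the coefficients $\lambda_i^j$ are non-constant; the paper computes $\mathcal{L}_{Y_i}\mathcal{L}_{Y_i}\omega=0$ and invokes the period-one Lemma \ref{lem:poisson}. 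Second, and more seriously: you never establish that the action functions $a_2,\dots,a_r$ are \emph{Cas-basic}, i.e.\ that they Poisson-commute with \emph{all} $F$-basic functions, not just that the one-forms $\alpha_i$ are $F$-basic. In the non-commutative setting this is the crux: without it the $a_i$ need not commute with each other or with the non-commuting integrals, so you cannot feed them into Lemma \ref{darbouxcarath} (which requires commuting functions), and statement (3) fails. The paper obtains this by integrating $\alpha_i$ with an explicit homotopy operator adapted to the retraction onto $\T^r\times\{0\}\times B^{s-r}$, which exhibits $a_i$ as $\frac{1}{\tau}\sum_j\lambda_i^j\pi_j$ averaged along the retraction, and then applies Lemma \ref{lem:per} to conclude the $\lambda_i^j$, hence the $a_i$, are Cas-basic. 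Your appeal to Mazzeo--Melrose identifies the singular residue correctly but does not produce this finer algebraic property of the primitives.

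Your final normalization step also diverges from the paper and is both unnecessary and unproved as stated. There is no residual closed $b$-two-form to ``trivialise and absorb'': once $\iota_{Y_i}\omega=-da_i$ pins down every term of $\omega$ containing a $d\theta_i$, the difference between $\omega$ and the model form is annihilated by all $\iota_{Y_i}$ and all $\mathcal{L}_{Y_i}$, hence is basic for the torus fibration, and it vanishes on the Darboux--Carath\'eodory chart $U$, whose projection covers the base; therefore it vanishes on the whole saturated neighborhood $W=\pi^{-1}(\pi(U))$. The paper implements exactly this by extending $g_i,p_k,q_k$ along the flows of the $X_{a_k}$ and checking that the brackets $\{a_i,g_j\}=\delta_{ij}$ and $\{g_i,g_j\}=0$ propagate via the Jacobi identity. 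Your proposed modification of $\theta_i$ and $q_k$ by Cas-basic functions could in principle be made to work, but the claim that the correction classes ``can be trivialised'' is exactly what would have to be proved, and it is cleaner to observe that there is nothing to correct.
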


We will need the following two lemmas for the proof of this theorem:

\begin{lemma}\label{lem:per}
Let $F:M\to \overline{R}^s$ be an $s$-tuple of $b$-functions on the $b$-symplectic manifold $M=\T^r\times B^s$. If the coefficients of a vector field of the form $Z=\sum_{j=1}^r\psi_j X_{f_j}$  are $F$-basic and the vector field has period one, then the coefficients are Cas-basic.
\end{lemma}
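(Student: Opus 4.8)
The plan is to show that each coefficient $\psi_j$ Poisson-commutes with all of $f_1,\dots,f_s$; by the characterisation recalled above (a function is Cas-basic iff it commutes with every $F$-basic function) this is precisely the assertion of the lemma. For the commuting indices this comes for free: $\psi_j$ is $F$-basic, and each $f_i$ with $i\le r$ is in involution with all of $f_1,\dots,f_s$, so its Hamiltonian flow preserves $F$ and hence every $F$-basic function; thus $\{f_i,\psi_j\}=0$ for $i\le r$. All the difficulty is concentrated in the non-commuting indices $i>r$, and this is exactly where the period-one hypothesis has to enter.

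First I would record the bracket $[Z,X_{f_i}]$. Since $f_j$ ($j\le r$) is in involution with everything we have $[X_{f_j},X_{f_i}]=0$, so the Leibniz rule for the Lie bracket gives
\[
  W_i := [Z,X_{f_i}] = -\sum_{j=1}^r \{f_i,\psi_j\}\,X_{f_j}.
\]
Write $c_j:=\{f_i,\psi_j\}$. Because the target space carries a well-defined Poisson structure (Section~\ref{sec:cas}), each $\{f_i,f_k\}$ is $F$-basic, and writing $\psi_j=g_j\circ F$ the Leibniz rule gives $c_j=\sum_k \pp{g_j}{y_k}(F)\,\{f_i,f_k\}$, which is again $F$-basic. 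The goal is now $W_i=0$: on the everywhere-regular standard model the vector fields $X_{f_1},\dots,X_{f_r}$ are linearly independent, so $W_i=0$ forces each $c_j=0$, i.e.\ $\{f_i,\psi_j\}=0$.

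The crux is that $W_i$ is invariant under the flow of $Z$. Exactly as before one checks $[Z,X_{f_j}]=0$ for every $j\le r$ (here both $[X_{f_k},X_{f_j}]=0$ and $X_{f_j}(\psi_k)=\{f_j,\psi_k\}=0$), and $Z(c_j)=\sum_k \psi_k\{f_k,c_j\}=0$ because $c_j$ is $F$-basic and the $f_k$ with $k\le r$ preserve $F$-basic functions. Hence $\mathcal{L}_Z W_i=[Z,W_i]=-\sum_j\big(Z(c_j)X_{f_j}+c_j[Z,X_{f_j}]\big)=0$. Now let $\Phi^t$ be the flow of $Z$; as $Z=\sum_j\psi_jX_{f_j}$ is tangent to the compact Liouville tori it is complete, and by hypothesis $\Phi^1=\id$. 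Differentiating, $\frac{d}{dt}(\Phi^t)^*X_{f_i}=(\Phi^t)^*W_i=W_i$, the last equality because $\mathcal{L}_Z W_i=0$; integrating over $[0,1]$ yields $(\Phi^1)^*X_{f_i}-X_{f_i}=W_i$, and since $\Phi^1=\id$ the left-hand side vanishes, so $W_i=0$. This gives $\{f_i,\psi_j\}=0$ for the remaining indices $i>r$ as well, and the lemma follows.

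I expect the main obstacle to be identifying the correct object to differentiate along the flow: periodicity by itself yields only the integral constraint $\int_0^1(\Phi^t)^*[Z,X_{f_i}]\,dt=0$, and it is the \emph{second-order} vanishing $\mathcal{L}_Z W_i=0$ — itself a consequence of the $F$-basicness of the $\psi_j$ and of the coefficients $c_j$ — that upgrades this to the pointwise identity $W_i=0$. The one remaining point to keep in mind is that one of the $f_j$ is a genuine $b$-function carrying a logarithmic term, so one should check that the bracket manipulations are legitimate there; this is routine, since every Hamiltonian vector field appearing in the argument is smooth.
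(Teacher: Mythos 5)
Your proof is correct and is essentially the argument the paper invokes: the paper's proof of Lemma~\ref{lem:per} simply defers to \cite{Laurent-Gengoux2010}, and what you have written out — reducing Cas-basicness to $\{f_i,\psi_j\}=0$, computing $[Z,X_{f_i}]=-\sum_j\{f_i,\psi_j\}X_{f_j}$ with $F$-basic coefficients, and using $\mathcal{L}_Z[Z,X_{f_i}]=0$ together with periodicity to force the bracket to vanish — is exactly that argument, with the same ``second-order vanishing plus period one'' mechanism the paper isolates separately as Lemma~\ref{lem:poisson}. Your closing remark that all Hamiltonian vector fields involved are smooth is precisely the ``replace Hamiltonian by $b$-Hamiltonian'' adaptation the paper alludes to.
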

\begin{proof}
The proof is exactly the same as in \cite{Laurent-Gengoux2010} replacing Hamiltonian by $b$-Hamiltonian vector field.
\end{proof}

The following lemma was proved in \cite{Laurent-Gengoux2010} (see Claim 2),
\begin{lemma}\label{lem:poisson}If $\Y$ is a complete vector field of period one and $P$ is a bivector field for which
$\L_{\Y}^2 P=0$, then $\L_\Y P=0$.
\end{lemma}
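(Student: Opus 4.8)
The plan is to exploit the periodicity of the flow of $\Y$ to turn the hypothesis $\L_\Y^2 P=0$ into an ordinary (in fact affine) differential equation for the one-parameter family of pulled-back bivectors. Write $\phi_t$ for the flow of $\Y$; since $\Y$ is complete this is defined for all $t\in\R$, and the period-one assumption means precisely that $\phi_1=\id$ (equivalently $\phi_{t+1}=\phi_t$, so in particular $\phi_1=\phi_0=\id$). For a bivector field the pullback $\phi_t^*P$ is well-defined via the inverse diffeomorphism (set $\phi_t^*:=(\phi_{-t})_*$), and the standard identity $\frac{d}{dt}\phi_t^*T=\phi_t^*(\L_\Y T)$ holds for any tensor field $T$.

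First I would set $g(t):=\phi_t^*P$, a smooth curve in the space of bivector fields, and differentiate twice. Using the identity above, $g'(t)=\phi_t^*(\L_\Y P)$ and then $g''(t)=\phi_t^*(\L_\Y^2 P)=0$ by hypothesis. Hence $g$ is an affine function of $t$, and integrating gives $g(t)=g(0)+t\,g'(0)=P+t\,\L_\Y P$.

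The final step is where periodicity enters. Since $\phi_1=\phi_0=\id$ we have $g(1)=g(0)=P$. Substituting $t=1$ into $g(t)=P+t\,\L_\Y P$ yields $P=P+\L_\Y P$, and therefore $\L_\Y P=0$, as claimed.

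This is essentially the standard averaging/integration trick for periodic flows, and I do not expect a genuine obstacle. The only points needing (routine) care are that the flow exists globally, which is guaranteed by completeness; that the pullback of a contravariant tensor by the flow behaves well under differentiation, which is standard once the convention $\phi_t^*:=(\phi_{-t})_*$ is fixed; and the conceptual observation that it is exactly the period-one hypothesis that forces the linear-in-$t$ coefficient $\L_\Y P$ of the affine curve $g$ to vanish (an affine curve that returns to its starting value must be constant).
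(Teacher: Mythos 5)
Your proof is correct and is exactly the standard argument: the paper does not reprove this lemma but cites Claim 2 of Laurent-Gengoux--Miranda--Vanhaecke, whose proof is the same computation, namely $\phi_t^*P=P+t\,\L_\Y P$ from $\frac{d^2}{dt^2}\phi_t^*P=\phi_t^*(\L_\Y^2P)=0$, followed by evaluation at $t=1$ using $\phi_1=\id$. No gaps; the points you flag as needing care (completeness, the pullback convention for contravariant tensors, and that the affine curve lives pointwise in the fixed fibre $\Lambda^2T_pM$) are indeed the only ones, and you handle them correctly.
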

We can now proceed with the proof of Theorem \ref{thm:action-anglenc}:

\begin{proof} (of Theorem \ref{thm:action-anglenc})
In the first step we perform ``uniformization of periods" similar to \cite{Laurent-Gengoux2010} and \cite{KMS}. The joint flow of the vector fields $X_{f_1},\ldots,X_{f_r}$ defines an $\R^r$-action on $M$, but in general not a $\T^r$-action, although it is periodic on each of its orbits $\T^r\times \{\const\}$.

Denoting the time-$s$ flow of the Hamiltonian vector field $X_{f}$ by $\Phi_{X_{f}}^{s}$, the joint flow of the Hamiltonian vector fields $X_{f_1}, \dots, X_{f_r}$ is
\begin{align*}
\Phi :\R^r \times ( \T^r \times B^s ) &\to \T^r \times B^s \\
 \big((s_1,\ldots,s_r),(x, b)\big) &\mapsto \Phi_{X_{f_1}}^{s_1}\circ\dots\circ \Phi_{X_{f_r}}^{s_n}(x, b).
\end{align*}
Because the $X_{f_i}$ are complete and commute with one another, this defines an $\R^r$-action on $\T^r \times B^s$. When restricted to a single orbit $\T^r \times \{b\}$ for some $b \in B^s$, the kernel of this action is a discrete subgroup of $\R^r$, hence a lattice $\Lambda_{b}$, called the {\it period lattice} of the orbit $\T^r \times \{b\}$. Since the orbit is compact, the rank of $\Lambda_b$ is $r$. We can find smooth functions (after shrinking the ball $B^s$ if necessary)
$$\lambda_i: B^s \rightarrow \mathbb{R}^r, \quad i=1,\ldots,r$$
such that
\begin{itemize}
\item $(\lambda_1(b), \lambda_2(b), \dots, \lambda_r(b))$ is a basis for the period lattice $\Lambda_b$ for all $b \in B^s$
\item $\lambda_i^1$ vanishes along $\{0\} \times B^{s-1}$ for $i > 1$, and $\lambda_1^1$ equals the modular period $c$ along $\{0\} \times B^{s-1}$. Here, $\lambda_i^{j}$ denotes the $j^{\textrm{th}}$ component of $\lambda_i$.
\end{itemize}

Using these functions $\lambda_i$ we define the ``uniformized'' flow
\begin{align*}
\tilde \Phi :\R^r \times ( \T^r \times B^s ) &\to ( \T^r \times B^s )\\
 \big((s_1,\ldots,s_r),(x, b)\big) &\mapsto \Phi\big(\sum_{i=1}^r s_i \lambda_i(b), (x,b) \big).
\end{align*}
The period lattice of this $\R^r$-action is constant now (namely $\Z^r$) and hence the action naturally defines a $\T^r$ action. In the following we will interpret the functions $\lambda_i$ as functions on $\T^r\times B^s$ (instead of $B^s$) which are constant on the tori  $\T^r\times \{b\}$.

We denote by $Y_1,\ldots,Y_r$ the fundamental vector fields of this action. Note that $Y_i = \sum_{j=1}^r \lambda_i^j X_{f_j}$.
We now use the Cartan formula for $b$-symplectic forms (where the differential is the one of the complex of $b$-forms \cite{Guillemin2012}  \footnote{The decomposition of a $b$-form of degree $k$ as $\omega=\frac{dt}{t}\wedge \alpha+\beta$ for  $\alpha, \beta$ De Rham forms proved in \cite{Guillemin2012} allows to extend the Cartan formula valid for smooth De Rham forms to  $b$-forms.}) to compute the following expression:
\begin{align} \label{eqn:poisson}
\mathcal{L}_{Y_i}\mathcal{L}_{Y_i}\omega &= \mathcal{L}_{Y_i}(d(\iota_{Y_i}\omega)+\iota_{Y_i} d\omega)\\
&= \mathcal{L}_{Y_i}(d(-\sum_{j=1}^n \lambda_i^j d{f_j})) \\
&= -\mathcal{L}_{Y_i}\left(\sum_{j=1}^n d\lambda_i^j \wedge d{f_j}\right) = 0
\end{align}

\noindent where in the last equality we used the fact that $\lambda_i^j$ are constant on the level sets of $F$. By applying Lemma \ref{lem:poisson} this yields  $\mathcal{L}_{Y_i}\omega = 0$, so the vector fields $Y_i$ are Poisson vector fields, i.e. they preserve the $b$-symplectic form.

We now show that the $Y_i$ are Hamiltonian, i.e. the ($b$-)one-forms
\begin{equation}\label{alphai}
\alpha_i:=\iota_{Y_i}\omega = -\sum_{j=1}^r \lambda_i^j df_j,\quad i=1,\ldots,r,
\end{equation}
which are closed (because  $Y_i$ are Poisson) have a ($^b C^\infty$-)primitive $a_i$. Since $\lambda_i^1$ vanishes along $\T^r \times \{0\} \times B^{s-1}$ for $i > 1$, the one-forms $\alpha_i$ defined in Equation \eqref{alphai} and hence the functions $a_i$ are smooth for $i>1$. On the other hand, $\lambda_1^1$ equals the modular period $c$ along $\T^r \times \{0\} \times B^{s-1}$ and therefore $a_1= c\log|t|$ for some defining function $t$.

We compute the functions $a_2,\ldots,a_r$ explicitly by applying a homotopy formula to the smooth one-forms $\alpha_2,\ldots,\alpha_r$. This not only yields that these one-forms are exact but moreover that their $C^\infty$-primitives  $a_2,\ldots,a_r$ are Cas-basic. (For the $b$-function $a_1= c\log|t|$ this is clear.)  This is equivalent to proving that these closed forms are exact for the corresponding sub-complex of Cas-basic $b$-forms. We do this by means of adapted homotopy operators.

 Consider the following homotopy formula (see for instance  \cite{evaromero}):
%, of which the $b$-version is given in Lemma \ref{lem:hom} below:
$$ \alpha_i - \phi^\ast_0(\alpha_i)= I(\underbrace{d(\alpha_i)}_{=0})+d(I(\alpha_i)),\quad i=2,\ldots,r$$
where the functional $I$ will be defined below and $\phi_\tau$ is the retraction from $\T^r\times B^s$ to $\T^r\times \{0\} \times B^{s-r}$:
$$\phi_\tau(x_1,\ldots,x_r,b_1,\ldots,b_r,b_{r+1},\ldots,b_s)=(x,\tau b_1,\ldots \tau b_r, b_{r+1},\ldots,b_s).$$
Note that $\phi^\ast_0(\alpha_i)=0$ since for any vector field $X\in \mathcal{X}(\T^r\times \{0\} \times B^{s-r})$ we have $\alpha_i(X)=0$. Recall that $\alpha_i$ is a linear combination of $d\pi_2,\ldots,d\pi_r$ and therefore evaluates to zero for $X$ a linear combination of $\pp{}{x_1},\ldots,\pp{}{x_r},\pp{}{\pi_{r+1}},\ldots,\pp{}{\pi_{s}}$. Therefore the homotopy formula tells us that the Hamiltonian function of $\alpha_i$ ($i=2,\ldots,r$) is explicitly given by $I(\alpha_i)$, which is defined as follows:
$$I(\alpha_i)=\int_0^1 \phi_\tau^\ast(\iota_{\xi_\tau}(\alpha_i)).$$
 Here $\xi_\tau$  is the vector field associated with the retraction:
$$\xi_\tau= \frac{d\phi_\tau}{d\tau}\circ \phi_\tau^{-1} = \frac{1}{\tau} \sum_{k=1}^s  \pi_k \pp{}{\pi_k}.$$
 Therefore we have
$$\iota_{\xi_\tau}(\alpha_i) = \frac{1}{\tau} \sum_{j=2}^r \lambda_i^j d\pi_j(\xi_\tau) = \frac{1}{\tau} \sum_{j=2}^r\sum_{k=1}^s \lambda_i^j \pi_k d{\pi_j}\left(\pp{}{\pi_k}\right)= \frac{1}{\tau} \sum_{j=2}^r \lambda_i^j \pi_j .$$
%and hence
%$$\phi_\tau^\ast (\iota_{\xi_\tau}(\alpha_i)) = \frac{\lambda_i^1}{t}  +\sum_{j=2}^r \lambda_i^j \pi_j.$$
In the last equality we have used $d\pi_j(\pp{}{\pi_k})=\delta_{jk}$ for $j>2$.% and $df_1(\pp{}{\pi_k})=\frac{\delta_{1k}}{\pi_1}$.

The projections $\pi_j,j=1,\ldots,r$, are obviously Cas-basic. The functions $\lambda_i^j$ are Cas-basic by Lemma \ref{lem:per}. The pullback $\phi_\tau^\ast$ does not affect the Cas-basic property since it leaves the non-commutative part of the system invariant. We conclude that the functions $\phi_\tau^\ast (\iota_{\xi_\tau}(\alpha_i))$  and hence $a_1,\ldots,a_r$ are Cas-basic.

%Let $W$ be a saturated neighborhood of $\T^r\times \{0\}$ contained in $B^s$, i.e. it is of the form $F^{-1}(B^s_0)$ where $B^s_0\subset B^s$ is a ball around $0$.
We apply the Darboux-Carath\'{e}odory theorem for $b$-integrable systems to a point $p\in \T^r\times \{0\}$ and the independent commuting smooth functions $a_2,\ldots,a_n$. Then on a neighborhood $U$ of $p$ we obtain a set of coordinates $(t, g_1, a_2, g_2 ,\ldots,a_r,g_r,q_1,p_1,q_2,p_2,\ldots,q_\ell,p_\ell)$, where $\ell = (s-2r)/2$, such that
\begin{equation}\label{omegau}
      \omega|_U=\frac{c}{t} dt \we dg_1 + \sum_{i=2}^k da_i \we dg_i +  \sum_{i=1}^{\ell} d p_i \we d q_i.
\end{equation}
The idea of the next steps is to extend this local expression to a neighborhood of the Liouville torus using the $\T^r$-action given by the vector fields $X_{a_k}$. First, note that the functions $(q_1,p_1,q_2,p_2,\ldots,q_\ell,p_\ell)$ do not depend on $f_i$ and therefore can be extended to the saturated neighborhood $W:=\pi^{-1}(\pi(U))$. Note that $Y_i = \pp{}{g_i}$ and therefore the flow of the fundamental vector fields of the $Y_i$-action corresponds to translations in the $g_i$-coordinates. In particular, we can naturally extend the functions $g_i$ to the whole set $W$ as well.

We want to see that the functions
\begin{equation}\label{ourcoo}
t, g_1, a_2, g_2 ,\ldots,a_r,g_r, q_1,p_1,q_2,p_2,\ldots,q_\ell,p_\ell
\end{equation}
which are defined on $W$, indeed define a chart there (i.e. they are independent) and that $\omega$ still has the form given in Equation \eqref{omegau}.

It is clear that $\{a_i, g_j\}=\delta_{ij}$ on $W$. To show that $\{g_i, g_j\}=0$, we note that this relation holds on $U$ and flowing with the vector fields $X_{a_k}$ we see that it holds on the whole set $W$:
$$ X_{a_k}\big(\{g_i,g_j\} \big) = \{\{g_i, g_j\},  a_k\} = \{ g_i, \delta_{ij}\} - \{g_j,\delta_{ik}\} = 0. $$
This verifies that $\omega$ has the form \eqref{omegau} above and in particular, we conclude that the derivatives of the functions \eqref{ourcoo} are independent on $W$, hence these functions define a coordinate system.

Since the vector fields $\pp{}{g_i}$ have period one, we can view $g_1,\ldots,g_r$ as $\R\backslash \Z$-valued functions (``angles") and therefore use the letter $\theta_i$ instead of $g_i$.
\end{proof}

 \begin{remark} In the language of cotangent models introduced in \cite{km}, this theorem can be expressed as saying that a non-commutative $b$-integrable system is semilocally equivalent given by the  the twisted $b$-cotangent lift of the $\T^r$-action on itself by translations.
 \end{remark}

\end{document}